\theoremstyle{plain}
\newtheorem{theorem}{Theorem}[section]
\newtheorem{corollary}[theorem]{Corollary}
\theoremstyle{definition}
\theoremstyle{remark}
\begin{document}
\title[Laplacian Spectrum of Super Graphs defined on Certain Non-abelian Groups]{Laplacian Spectrum of Super Graphs defined on Certain Non-abelian Groups}
\author[Varun J Kaushik, Ekta, Parveen, Jitender Kumar]{Varun J Kaushik, Ekta, Parveen, Jitender Kumar$^{*}$}
\address{$\text{}^1$Department of Mathematics, Birla Institute of Technology and Science Pilani, Pilani, India}
\address{$\text{}^2$Department of Mathematics, Indian Institute of Technology Guwahati, India}

\email{f20190681@pilani.bits-pilani.ac.in, ektasangwan0@gmail.com, p.parveenkumar144@gmail.com, \newline jitenderarora09@gmail.com}
\begin{abstract}

Given a graph $A$ on a group $G$ and an equivalence relation $B$ on  $G$, the $B$ super$A$ graph, whose vertex set is $G$ and two vertices $g$, $h$ are adjacent if and only if there exist $g^{\prime} \in[g]$ and $h^{\prime} \in[h]$ such that $g^{\prime}$ and $h^{\prime}$ are adjacent in $A$. Recently, Dalal \emph{et al.}  (Spectrum of super commuting graphs of some finite groups, \textit{Computational and Applied Mathematics}, 43(6):348, 2024) obtain the Laplacian spectrum of supercommuting graphs of certain non-abelian groups including the dihedral group and the generalized quaternion group. In this paper, we continue the study of Laplacian spectrum of certian $B$ super$A$ graphs. We obtain the Laplacian spectrum of conjugacy superenhanced power graphs of certain non-abelian groups, namely: dihedral group, generalized quaternion group and semidihedral group. Moreover to enhance the work of Dalal \emph{et al}, we obtain the Laplacian spectrum of conjugacy supercommuting graph of semidihedral group. We prove that graphs considered in this paper are $L$-integral.


\end{abstract}

\subjclass[2020]{05C25, 05C50}

\keywords{Enhanced power graph, Commuting graph, Super graphs, Conjugacy classes, Laplacian spectrum}

\maketitle

 \section{Introduction}

There are several types of graphs associated with group structures in the literature, such as power graphs \cite{a.MKsen2009}, enhanced power graphs \cite{a.Bera2017}, commuting graphs \cite{a.segev2001commuting}, cyclic graphs \cite{abdollahi2009noncyclic}, and Cayley graphs \cite{a.kelarev2002undirected}. The study of these algebraic graphs is of great importance due to their wide range of applications, including connections to automata theory, see \cite{kelarev2004labelled,a.kelarev2009cayley} and the books \cite{kelarev2002ring,kelarev2003graph}.

The undirected \emph{power graph} $\mathcal{P}(G)$ of a group $G$ is a graph whose vertex set is $G$ and two distinct vertices $x$, $y$ are adjacent if either $x = y^m$ or $y = x^n$ for some $m, n \in \mathbb{N}$.  Ali {\emph{et al.} \cite{ali2020powergraph} studied metric dimension and resolving polynomials of power graphs of certain non-abelian groups. Further, Chattopadhyay \emph{et al.} \cite{chattopadhyay2015Laplacian} studied the Laplacian spectrum of the power graph for the cyclic group $\mathbb{Z}_n$ and the dihedral group $D_{2n}.$ Then Mehranian {\emph{et al.}
\cite{Mehranian2017spectra}, explored the Laplacian spectrum of the power graph for the cyclic group $\mathbb{Z}_n, D_{2n}$, elementary abelian groups of prime power order and the Mathieu group $M_{11}.$ 
Additionally, Chattopadhyay \emph{et al.} \cite{chattopadhyay2018spectralradius} derive both the lower and the upper bounds for the spectral radius of the power graph associated with the cyclic group $\mathbb{Z}_n$. They also provide a characterization of the graphs that achieve these extremal bounds. Furthermore, the authors calculate the spectra of the power graphs corresponding to the dihedral group $D_{2n}$ and the generalized quaternion group $Q_{4n}.$ For more results on power graphs, the readers are referred to the survey paper \cite{ajay2021survey} and references therein.

The \textit{commuting graph} $Com(G)$ of a group $G$ is defined with vertex set $\Omega \subseteq G$ where two distinct vertices $x$ and $y$ are adjacent if $xy = yx$. When $\Omega = G$, the power graph $\mathcal{P}(G)$ is a spanning subgraph of the commuting graph $Com(G)$. Dutta \emph{et al.} \cite{dutta2018noncommuting} obtained the Laplacian spectrum of non-commuting graphs for certain non-abelian groups. Moreover, Subarsha \cite{banerarjee2023spectra} calculate the distance Laplacian spectra of commuting graph for certain non-abelian groups including dihedral group and generalized quaternion group. Aalipour \emph{et al.} \cite{a.Aalipour2017} characterized finite groups $G$ for which $\mathcal{P}(G)$ coincides with $Com(G)$ and introduced the \textit{enhanced power graph}. enhanced power graph $\mathcal{P}_E(G)$ is an undirected and simple graph whose vertices are elements of $G$ and two distinct vertices $x$ and $y$ are adjacent if they belong to the same cyclic subgroup $\langle z \rangle$ for some $z \in G$. They also characterized finite groups for which an arbitrary pair of the these graphs (power graph, enhanced power graph, commuting graph) coincide.

Bera \emph{et al.} \cite{a.Bera2017} characterized abelian groups and non-abelian $p$-groups with dominatable enhanced power graphs. Panda \emph{et al.} \cite{a.Panda-enhanced} studied graph-theoretic properties, such as minimum degree, independence number, matching number, strong metric dimension, and perfectness of enhanced power graphs over finite abelian and non-abelian groups, including dihedral and generalized quaternion group, as well as the group $U_{6n} = \langle a, b : a^{2n} = b^3 = e, \ ba = ab^{-1} \rangle$. Dalal \emph{et al.} \cite{a.dalal2021enhanced} investigated the properties of enhanced power graphs over generalized quaternion groups and the group $V_{8n} = \langle a, b : a^{2n} = b^4 = e, \ ba = a^{-1}b^{-1}, \ b^{-1}a = a^{-1}b \rangle$. Recently, Parveen \emph{et al.} \cite{parveen2024enhanced} explored the Laplacian spectrum of the enhanced power graph of semidihedral group, dihedral group, and generalized quaternion group, providing valuable insights into their structural properties. For a comprehensive overview of results and open questions on enhanced power graphs of groups, one can refer to the survey paper \cite{a.masurvey2022} and references therein. For a finite group $G$, there is a hierarchy containing the power graph, the enhanced power graph, and the commuting graph: one is a spanning subgraph of the next. Arunkumar \emph{et al.} \cite{arunkumar2022super}  explored the hierarchy formed by power, enhanced power, and commuting graphs, as well as modifications based on natural equivalence relations as follows.

Given a graph $A$ on a group $G$ and an equivalence relation $B$ on $G$, then the $B$ super$A$ graph, whose vertex set is $G$ and vertices $g$, $h$ are adjacent if and only if there exist $g^{\prime} \in[g]$ and $h^{\prime} \in[h]$ such that $g^{\prime}$ and $h^{\prime}$ are are adjacent in $A$. For example, let $G$ be a finite group and let $[x]$ be the conjugacy class of the element $x$ in $G$. The \emph{conjugacy superenhanced power graph} $CSEP(G)$ of the group $G$, is a simple undirected graph whose vertex set is $G$ and two vertices $x$, $y$ are adjacent if there exist $x^{\prime} \in[x]$ and $y^{\prime} \in[y]$ such that $x^{\prime}$ and $y^{\prime}$ are are adjacent in the enhanced power graph $\mathcal{P}_E(G)$. 

In \cite{arunkumar2022super}, the authors consider $B$ super$A$ graph, where $B$ is an equivalence relation, \emph{viz.} equality, conjugacy and same order, and $A \in \{\mathcal{P}(G), \mathcal{P}_E(G), Com(G)\}$. The resulting nine graphs (out of which eight are distinct in general) form a two-dimensional hierarchy. The authors of \cite{arunkumar2022super} identified conditions under which pair of these graphs coincide. This leads to the characterization of special group classes and the analysis of graph-theoretic properties such as completeness, dominant vertices, and clique numbers. Then Arunkumar \emph{et al.} \cite{arunkumar2024super} expressed certain supergraphs as graph compositions. The Wiener index of equality supercommuting and conjugacy supercommuting graphs for dihedral group and generalized quaternion group are determined in \cite{arunkumar2024super}. Moreover, Arunkumar \emph{et al.} \cite{arunkumar2024main} obtained the spectrum of equality supercommuting and conjugacy supercommuting graphs for dihedral group and generalized quaternion group, and proved that these graphs are not integral. Additionally, the Laplacian spectrum of conjugacy supercommuting graphs and order supercommuting graphs of dihedral and generalized quaternion group has been computed by Dalal \emph{et al.} \cite{dalal2024spectrum}. Motivated with the above work on $B$ super$A$ graphs, in this paper our aim is to study the Laplacian spectrum of conjugacy superenhanced power graphs of certain non-abelian groups, namely: dihedral group, generalized quaternion group and the semidihedral group. Moreover, to strengthen the work of Dalal \emph{et al.} \cite{dalal2024spectrum}, in this paper we obtain the Laplacian spectrum conjugacy supercommuting graphs for the semidihedral group.

\section{Preliminaries}
In this section, we recall necessary definitions, results and notations of graph theory from \cite{b.West}. A graph $\Gamma$ is an ordered pair $ \Gamma = (V, E)$, where $V = V(\Gamma)$ denotes the set of vertices and $E = E(\Gamma)$ denotes the set of edges in $\Gamma$. Furthermore, the \emph{order} of a graph $\Gamma$ is defined as the number of vertices in $\Gamma$. We say that two distinct vertices $a$ and $b$ are $\mathit{adjacent}$, denoted by $a \sim b$, if there is an edge connecting $a$ and $b$.  The \emph{neighbourhood} $N(x)$ of a vertex $x$ is the collection of all vertices which are adjacent to $x$ in the graph $ \Gamma $. Additionally, we denote $N[x] = N(x) \cup \{x\}$. We are considering simple graphs, i.e. undirected graphs with no loops or repeated edges. A \emph{subgraph} of a graph $\Gamma$ is defined as a graph $\Gamma'$ for which the vertex set $V(\Gamma') \subseteq V(\Gamma)$ and $E(\Gamma') \subseteq E(\Gamma)$. The subgraph $\Gamma(X)$ of a graph $\Gamma$, induced by a set $X$, consists of the vertex set $X$ where two vertices are connected by an edge if and only if they are adjacent in $\Gamma$. A graph $\Gamma$ is said to be a  $complete$ graph if every pair of distinct vertices are adjacent. We denote $K_n$ by the complete graph on $n$ vertices. The $complement \ \overline{\Gamma}$ of a simple graph $\Gamma$ is defined as a simple graph with the same vertex set $V(\Gamma)$. An edge $(u,v)\in E(\overline{\Gamma})$ if and only if the edge  $(u,v)\notin E(\Gamma)$. A graph that contains no cycles is referred to as $acyclic$. A $tree$ is defined as a connected acyclic graph. A $spanning \ subgraph$ of a graph $\Gamma$ is a subgraph that includes all the vertices of $\Gamma$, specifically the set $V(\Gamma)$. A $spanning \ tree$ is a spanning subgraph that meets the criteria of being a tree.
 The union of two graphs, $G_1$ and $G_2$, denoted as $G_1 \cup G_2$, is obtained by merging their vertex sets and edge sets: the vertex set is $V(G_1) \cup V(G_2)$ and the edge set is $E(G_1) \cup E(G_2)$. On the other hand, the join of two graphs $G_1$ and $G_2$, denoted as $G_1 \vee G_2$, is derived from this union by adding edges that connect every vertex in $G_1$  to every vertex in $G_2$.Consider $H$ be a graph with vertex set $V(H)= \{ 1, 2,\ldots, k \}$, and let $\Gamma_1,\ldots,\Gamma_k$ be the collection of graphs, each with the vertex set $V(\Gamma_i)=\{v^1_{i},\ldots,{v}^{n_i}_i\}$ for $1 \leq i \leq k$. Then the generalized composition of these graphs, denoted as $G=H[\Gamma_1, \ldots, \Gamma_k]$, has a vertex set given by the disjoint union $V(\Gamma_1) \cup \cdots \cup V(\Gamma_k)$.
Two vertices $v^p_i$  and  $v^q_j$ in $G$  are adjacent if one of the following conditions holds:
\\ (i) $i = j$ and $v^p_i$ and $v^q_i$ are adjacent in $ \Gamma_i$.
\\(ii) $ i \neq j $ and there is an edge between vertices $i$ and $j$ in $H$.

Let $\Gamma$ be a finite simple undirected graph with vertex set $V(\Gamma) = \{v_1, v_2, \ldots, v_n\}$. Then the \emph{adjacency matrix} $A(\Gamma)$ is an $n\times n$ matrix where the entry $(i, j)$ is $1$ if vertices $v_i$ and $v_j$ are adjacent, and $0$ otherwise. We denote the degree of vertex $v_i$ by $d_i$, leading to the diagonal matrix $D(\Gamma) = {\rm diag}(d_1, d_2, \ldots, d_n)$. The \emph{Laplacian matrix} $L(\Gamma)$ of $\Gamma$ is the matrix $D(\Gamma) - A(\Gamma)$. The matrix $L(\Gamma)$ is symmetric and positive semidefinite, it implies that its eigenvalues are real and non-negative. Moreover, the sum of the entries in each row (column) of $L(\Gamma)$ is equal to zero. The \emph{characteristic polynomial} of the Laplacian matrix $L(\Gamma)$ is denoted as $\Phi(L(\Gamma), x)$. The eigenvalues of $L(\Gamma)$, known as the \emph{Laplacian eigenvalues} of the graph $\Gamma$, are denoted by : $\lambda_1(\Gamma) \geq \lambda_2(\Gamma) \geq \cdots \geq \lambda_n(\Gamma) = 0$. Let us denote the distinct eigenvalues of $\Gamma$ by $\lambda_{n_1}(\Gamma) \geq \lambda_{n_2}(\Gamma) \geq \cdots \geq \lambda_{n_r}(\Gamma) = 0$, with multiplicities $m_1, m_2, \ldots, m_r$, respectively. The \emph{Laplacian spectrum} of $\Gamma$, that is, the spectrum  of $L(\Gamma)$ is denoted as $\displaystyle \begin{pmatrix}
\lambda_{n_1}(\Gamma) & \lambda_{n_2}(\Gamma) & \cdots& \lambda_{n_r}(\Gamma)\\
 m_1 & m_2 & \cdots & m_r\\
\end{pmatrix}$. Let $J_n$ denote the square matrix of order $n$ matrix where every entry is equal to $1$. Further, let $I_n$ denote the identity matrix of order $n$.

A \textit{maximal cyclic subgroup} of a group $G$ is a cyclic subgroup that is not a proper subgroup of any other cyclic subgroup of $G$. Let $(x)^G$ denote the \emph{conjugacy class} $\{gxg^{-1} : g \in G\}$ of $x\in G.$

For $n \geq 3$, the \emph{dihedral group} $D_{2n}$ is a group of order $2n$ defined as: 
\[
D_{2n} = \langle a, b : a^{n} = b^2 = e, \ ba = a^{-1}b \rangle.
\]
Note that the group $D_{2n}$ has one maximal cyclic subgroup $M = \langle a \rangle$ of order $n$ and $n$ maximal cyclic subgroups
$M_i =  \langle a^ib \rangle,$ where $1 \leq i \leq n$, of order $2.$

For $n \geq 2$, the \emph{generalized quaternion group} $Q_{4n}$ is a group of order $4n$ defined as:
\[
Q_{4n} = \langle a, b : a^{2n} = e, a^{n} = b^2, ba = a^{-1}b \rangle.
\]
Observe that the group
$Q_{4n}$ has one maximal cyclic subgroup $M = \langle a \rangle$ of order $2n$ and $n$ maximal cyclic subgroups
$M_i =  \langle a^ib \rangle,$ where $1 \leq i \leq n$, of order $4.$

For $n \geq 2$, the \emph{semidihedral group} $SD_{8n}$ is a group of order $8n$ defined by the generators and relations:

\[
SD_{8n} = \langle a, b : a^{4n} = e, \, b^2 = e, \, ba = a^{2n - 1}b \rangle.
\]
We have $$ ba^i = \left\{ \begin{array}{ll}
a^{4n -i}b & \mbox{if $i$ is even,}\\
a^{2n - i}b& \mbox{if $i$ is odd,}\end{array} \right.$$
so that every element of $SD_{8n} {\setminus} \langle a \rangle$ is of the form $a^ib$ for some $0 \leq i \leq 4n-1$. We denote the subgroups $P_i = \langle a^{2i}b \rangle = \{e, a^{2i}b\}$ and $ Q_j =  \langle a^{2j + 1}b \rangle = \{e, a^{2n}, a^{2j +1}b, a^{2n + 2j +1}b\} $. Then we have
\begin{equation}\label{SD_8n}
    SD_{8n} = \langle a \rangle \cup \left( \bigcup\limits_{ i=0}^{2n-1} P_i \right) \cup \left( \bigcup\limits_{ j=  0}^{n-1} Q_{j}\right),
\end{equation}
further, \[\ 
 Z(SD_{8n}) = \begin{cases}
    \{e,a^{2n}\},& \text{if $n$ is even} \\
    \{e,a^{2n},a^n,a^{3n}\},& \text{if $n$ is odd}.
    \end{cases}
\]
Note that the group $SD_{8n}$ has one maximal cyclic subgroup $M = \langle a \rangle$ of order $4n$, $2n$ maximal cyclic subgroups
$M_i =  \langle a^{2i}b \rangle = \{e, a^{2i}b \},$ where $1 \leq i \leq 2n$, of order $2$ and $n$ maximal cyclic subgroups $M_j =  \langle a^{2j+1}b \rangle = \{e, a^{2n}, a^{2j+1}b, a^{2n+2j+1}b \},$ where $1 \leq j \leq n$, of order $4$.

 In this paper, we obtain the Laplacian spectrum of conjugacy superenhanced power graphs of dihedral group, generalized quaternion group and semidihedral group. Also, we obtained the Laplacian of conjugacy supercommuting graphs of semidihedral group.
\section{Laplacian Spectrum of Conjugacy Superenhanced Power Graphs} 
In this section, we will obtain the structure of conjugacy superenhanced power graphs of the dihedral, dicyclic, and semidihedral groups and deduce their corresponding Laplacian spectra. The conjugacy superenhanced power graph ($CSEP$) of a group is analogous to that of the conjugacy supercommuting graphs, where the commuting graph is replaced by the enhanced power graph and conjugacy is the equivalence relation for these graphs as well.
\begin{theorem}
For the dihedral group $D_{2n}$, the structure of conjugacy superenhanced power graph of $D_{2n}$ is given by

    \[
CSEP(D_{2n})= 
\begin{dcases}
    K_1 \lor(K_{\frac{n-1}{2}}\cup K_1)[K_1,K_2,K_2,\ldots,K_2,K_n], \text{ if n is odd}   \\
   K_1 \lor(K_1\lor K_{\frac{n-2}{2}}\cup K_1\cup K_1)[K_1,K_1,K_2,K_2,\ldots,K_2,K_{\frac{n}{2}},K_{\frac{n}{2}}], \text{ if n is even}.   \\
\end{dcases}
\]
\end{theorem}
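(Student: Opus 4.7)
The plan is to deduce the stated structural formula directly, by first pinning down the enhanced power graph $\mathcal{P}_E(D_{2n})$ and then transferring the information to $CSEP(D_{2n})$ through the conjugacy relation.

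First, I would enumerate the conjugacy classes of $D_{2n}$: for $n$ odd these are $\{e\}$, the pair classes $\{a^i, a^{n-i}\}$ with $1 \leq i \leq (n-1)/2$, and the single reflection class $\{b, ab, \ldots, a^{n-1}b\}$; for $n$ even one additionally gets the singleton class $\{a^{n/2}\}$, and the reflections split into $\{a^{2i}b\}$ and $\{a^{2i+1}b\}$, each of size $n/2$.

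Next, using the catalogue of maximal cyclic subgroups of $D_{2n}$ recalled in the preliminaries (the cyclic subgroup $M = \langle a \rangle$ of order $n$ together with the order-$2$ subgroups $\langle a^i b \rangle$), I would describe $\mathcal{P}_E(D_{2n})$: the identity $e$ is adjacent to every other vertex; any two rotations are adjacent, since they both lie in $\langle a \rangle$; and no reflection shares a cyclic subgroup either with a non-identity rotation or with another distinct reflection.

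Combining these observations with the definition of $CSEP$, I would argue that edges between two distinct conjugacy classes $C$ and $C'$ follow an all-or-nothing rule: the existence of a single $\mathcal{P}_E$-edge between representatives forces \emph{every} pair across $C$ and $C'$ to be adjacent in $CSEP$. Because rotation classes consist only of rotations and reflection classes only of reflections, this gives: $\{e\}$ joined to everything; any two non-identity rotation classes fully joined (since any two rotations lie in $\langle a \rangle$); and the reflection class(es) joined only to $\{e\}$. The same reasoning applied inside a single class forces each conjugacy class to become a clique in $CSEP$.

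Finally, I would assemble the structure. The identity $e$ contributes the outer $K_1 \lor \cdots$. In the odd case, the quotient on the remaining conjugacy classes is $K_{(n-1)/2} \cup K_1$ (the clique on rotation pair classes together with the isolated reflection class), and replacing each quotient vertex by the appropriate block ($K_2$ for a rotation pair and $K_n$ for the reflection class) reproduces the stated composition. In the even case, the central element $a^{n/2}$ gives an additional $K_1$ joined to the $K_{(n-2)/2}$ formed by the rotation pair classes (hence the inner factor $K_1 \lor K_{(n-2)/2}$), while the two reflection classes contribute two isolated $K_1$'s in the quotient; expanding each quotient vertex into its block ($K_2$ for a rotation pair, $K_{n/2}$ for each reflection class) then yields the claimed composition. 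The main obstacle I anticipate is the bookkeeping in the even case, where the extra singleton $\{a^{n/2}\}$ and the splitting of reflections into two distinct conjugacy classes complicate both the quotient graph and the composition list; the odd case serves as a clean warm-up in which the reflection part forms a single ``lump''.
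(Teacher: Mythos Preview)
Your proposal is correct and follows essentially the same route as the paper: list the conjugacy classes of $D_{2n}$, describe $\mathcal{P}_E(D_{2n})$ via its maximal cyclic subgroups ($\langle a\rangle$ and the $\langle a^ib\rangle$), and then read off the adjacencies in $CSEP(D_{2n})$ class by class to obtain the stated generalized composition. Your quotient-graph formulation is a bit more explicit than the paper's terse ``so we get the aforementioned graphs'', but the underlying argument is identical. One small caution: your justification that ``the same reasoning applied inside a single class forces each conjugacy class to become a clique'' does not literally follow from the all-or-nothing rule for the reflection class(es), since no two distinct reflections are adjacent in $\mathcal{P}_E(D_{2n})$; the completeness of each conjugacy class in a $B$ super$A$ graph is a built-in feature of the definition (as in Arunkumar \emph{et al.}), and both you and the paper invoke it without further comment.
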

\vspace{0.5mm}
\begin{proof}
For the dihedral group $D_{2n}$, we have
    \[\ 
 Z(D_{2n}) = \begin{cases}
    \{e,a^{\frac{n}{2}}\},& \text{if~ $n$~is~ even} \\
    \{e\},& \text{if~ $n$~ is~ odd}
    .\end{cases}
    \]
The conjugacy classes of $D_{2n}$~ are~ given~ as~ follows:
\\ \noindent(i) \textit{If $n$ is odd:}
\begin{center}
$(e)^{D_{2n}}=\{e\},$ 

$(a^i)^{D_{2n}} =\{a^i,a^{n-i}\}, \text{ for } 1\leq i \leq n-1, i \neq \frac{n}{2},$

$(a^ib)^{D_{2n}} = \{a^{i}b :  0\leq i\leq n-1 \}, \text{ for } 1\leq i \leq n-1.$
\end{center} 
\noindent(ii) \textit{If $n$ is even:}
 \begin{center}
$(e)^{D_{2n}}=\{e\},~~   (a^{\frac{n}{2}})^{D_{2n}}= \{a^\frac{n}{2}\},$

$(a^i)^{D_{2n}} =\{a^i,a^{n-i}\} \text{ for } 1\leq i \leq n-1, i \neq \frac{n}{2},$

$(a^{2i-1}b)^{D_{2n}} = \{a^{2i+1}b : 1 \leq i \leq \frac{n}{2} \},$

$(a^{2i}b)^{D_{2n}} =\{a^{2i}b : 1 \leq i \leq \frac{n}{2} \}.$
\end{center} 
 We know that all the maximal cyclic subgroups of $D_{2n}$ are
$\langle a^{i}b \rangle = \{e, a^{i}b\},$ for $ 1 \leq i \leq n$ and $\langle a \rangle = \{a^i: 1 \leq i \leq n$\}.\\    
Now, we note that all the elements of the form $\{a^i\}$ are adjacent to each other in $\mathcal{P}_E(D_{2n})$. Furthermore, the identity element is adjacent to all other vertices of $\mathcal{P}_E(D_{2n})$. Observe that the elements of the classes $\{a^ib\}$ are adjacent to each other and the identity element. So we get the aforementioned graphs.
\end{proof}
\begin{theorem}\label{CSEP-odd-D_2n}  
If $n$ is odd, then the characteristic polynomial of the Laplacian matrix of ${CSEP}(D_{2n})$ is given by
\[\Phi(L({CSEP}(D_{2n})), x) = x(x-1)(x-(n+1))^{n-1}(x -2n)(x-n)^{n-2}.\]
\end{theorem}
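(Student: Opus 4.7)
The plan is to exploit the generalized composition structure stated in the preceding theorem. Write $m = (n-1)/2$, so that $CSEP(D_{2n}) = H[\Gamma_0, \Gamma_1, \ldots, \Gamma_{m+1}]$ with outer graph $H = K_1 \vee (K_m \cup K_1)$ and inner graphs $\Gamma_0 = K_1$, $\Gamma_1 = \cdots = \Gamma_m = K_2$, and $\Gamma_{m+1} = K_n$. Ordering the vertices of $CSEP(D_{2n})$ block by block, the Laplacian $L$ decomposes as follows: the $i$-th diagonal block is $L(\Gamma_i) + N_i I_{n_i}$, where $N_i = \sum_{j \sim_H i} n_j$, and the $(i,j)$ off-diagonal block is $-J_{n_i \times n_j}$ when $i \sim_H j$ and zero otherwise. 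A direct count gives $N_0 = 2n - 1$, $N_i = n-2$ for $1 \le i \le m$, and $N_{m+1} = 1$.

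I would then split the eigenvectors into two classes. For any Laplacian eigenvector $v$ of $\Gamma_i$ orthogonal to $\mathbf{1}_{n_i}$ with eigenvalue $\mu$, embedding $v$ in the $i$-th block and padding with zeros yields an eigenvector of $L$ with eigenvalue $\mu + N_i$, because $J_{n_j \times n_i} v = \mathbf{1}_{n_j}(\mathbf{1}_{n_i}^T v) = 0$. Applied to each $K_2$ this contributes $n = 2 + (n-2)$ with total multiplicity $m = (n-1)/2$; applied to $K_n$ it contributes $n+1$ with multiplicity $n-1$; and $\Gamma_0 = K_1$ contributes nothing. Together these account for $(3n-3)/2$ of the $2n$ eigenvalues.

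The remaining $k = m+2 = (n+3)/2$ eigenvalues arise from eigenvectors constant on each block, for which $L$ restricts to the $k \times k$ quotient matrix $Q$ with $Q_{ii} = N_i$ and $Q_{ij} = -n_j$ when $i \sim_H j$. Exploiting the $S_m$-symmetry among the indices $1, \ldots, m$, any vector with $c_0 = c_{m+1} = 0$ and $\sum_{i=1}^m c_i = 0$ is an eigenvector of $Q$ with eigenvalue $n$, giving multiplicity $m-1 = (n-3)/2$ and raising the overall multiplicity of $n$ to $n-2$. On the complementary three-dimensional subspace where $c_1 = \cdots = c_m$, $Q$ reduces to the $3 \times 3$ matrix
\[
\begin{pmatrix} 2n-1 & -(n-1) & -n \\ -1 & 1 & 0 \\ -1 & 0 & 1 \end{pmatrix},
\]
whose row sums vanish, yielding the eigenvalue $0$. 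The lower two rows force $c_0 = (1-\lambda)c_1 = (1-\lambda)c_{m+1}$; substituting into the first row gives $\lambda = 1$ (with $c_0 = 0$ and $(n-1)c_1 + n c_{m+1} = 0$) and $\lambda = 2n$ (with $c_1 = c_{m+1}$). Assembling all factors produces the claimed characteristic polynomial. The only delicate step is setting up the quotient-matrix reduction cleanly; once that is in place, the $S_m$-symmetry and the row-sum structure essentially force the remaining eigenvalues.
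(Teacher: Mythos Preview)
Your argument is correct, but it takes a genuinely different route from the paper. The paper does not use the fine composition structure $H[\Gamma_0,\Gamma_1,\dots,\Gamma_{m+1}]$ at all; instead it observes (implicitly) that for odd $n$ the graph collapses to the much coarser join $K_1\vee(K_{n-1}\cup K_n)$, writes the Laplacian with rows/columns ordered as $e$, then $a,\dots,a^{n-1}$, then the $n$ reflections, and obtains a $2\times 2$ block-diagonal core with blocks $A=nI_{n-1}-J_{n-1}$ and $B=(n+1)I_n-J_n$ bordered by a single row/column for $e$. A single row operation $R_1\to(x-1)R_1-\sum_{j\ge 2}R_j$ peels off the factor $x(x-2n)/(x-1)$ and the remaining determinant is $|xI-A|\cdot|xI-B|$, computed from the known spectrum of $\alpha I-J$.

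Your approach keeps the $(m+2)$-block conjugacy-class decomposition, separates the ``internal'' eigenvectors (orthogonal to $\mathbf 1$ on each block) from block-constant ones, and then reduces the latter to a quotient matrix which you further diagonalize via $S_m$-symmetry. This is more structural and would generalize cleanly to other $B$ super$A$ graphs given only their composition description, at the cost of a longer argument; the paper's method is shorter here because for odd $n$ the $m$ conjugacy classes inside $\langle a\rangle$ merge into a single clique $K_{n-1}$, making the quotient-matrix machinery unnecessary. Both are valid; yours trades brevity for reusability.
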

\begin{proof}
The Laplacian matrix $L(CSEP(D_{2n}))$ is an $2n \times 2n$ matrix defined as follows, in which rows and columns are arranged in order by the vertices $e = a^{n}=b^{2}, a, a^2, \ldots, a^{n-1},$ and then $ab, a^2b,\ldots, a^{n}b=b$.   
\[
L(CSEP(D_{2n})) = \begin{pmatrix}
2n-1 & -1 & & \cdots & -1 & -1 & \cdots & -1 \\
-1 & & &  A & & & \mathcal{O} & \\
\vdots &  \\
-1 & &  & &  &  &  & \\
-1 &  &  & \mathcal{O}' & & \ & B & \\
\vdots & \\
-1 & & & & & & & & \\
\end{pmatrix},
\]
where \[A =\displaystyle \begin{pmatrix}
	n-1 & -1 & \cdots & -1\\
        -1 & n-1 & \cdots & -1\\
        \vdots & \vdots & \ddots & \vdots\\
        -1 & -1 & \cdots & n-1\\
	\end{pmatrix}\] 
and	 
        \[ B =\displaystyle \begin{pmatrix}
		n & -1 & \cdots & -1\\
        -1 & n & \cdots & -1\\
        \vdots & \vdots & \ddots & \vdots\\
        -1 & -1 & \cdots & n\\
	\end{pmatrix},
 \] \\
such that the orders of $A$ and $B$ are $n-1\times n-1$ and $n\times n$, respectively. Also, $\mathcal{O}$ is the zero matrix of order $(n-1)\times n$ and $\mathcal{O'}$ is the transpose of $\mathcal{O}$ matrix. It implies that 
$A = nI_{n-1} - J_{n-1}$ and
$ B = (n+1)I_{n} - J_{n}$.
 Then the characteristic polynomial of $L({CSEP}(D_{2n}))$ is
\[
\Phi(L(CSEP(D_{2n})), x) =
 \begin{vmatrix}
x-(2n-1) & 1 & \cdots & 1 & 1 & \cdots & 1 \\
1 & & xI_{n-1}-A & & & \mathcal{O} & \\
\vdots &  \\
1 & &  & & &  &  & \\
1 &  &  \mathcal{O}' & & \ & xI_{n}-B & \\
\vdots & \\
1 & & & & & & & & \\
\end{vmatrix}
.\]
Apply the row operation  $R_1 \rightarrow (x - 1)R_1 - R_2 - \cdots - R_{2n}$, and then expand by using the first row, we obtain
\[
\Phi(L(CSEP(D_{2n})), x) = x(x - 2n) \frac{1}{(x - 1)}
\det \begin{pmatrix}
(xI_{n-1} - A) & \mathcal{O} \\
\mathcal{O'} & (xI_{n} - B)
\end{pmatrix}
.\]

 Using the fact that if $M, Q$  are square matrices and the matrices $O$ and $ O'$ are zero matrices of appropriate sizes,  then $\left|\begin{array}{ll}M & O \\ O' & Q\end{array}\right|=|M|\cdot |Q|$ \cite[p.32]{b.spectra}. We get

\[
\Phi(L(CSEP(D_{2n})), x) = \frac{x(x - 2n)}{(x - 1)} \cdot {|xI_{n-1} - A|} \cdot {|xI_{n} - B|}.\
\]
Clearly, 
	$|xI_{n-1}-A| =(x-1)(x-n)^{n-2}$ 
 and	$|xI_{n}-B| =(x-1)(x-(n+1))^{n-1}$. This is because, if the eigenvalues of a matrix $A$ are $\{\sigma_1,\sigma_2,\ldots,\sigma_n\}$, then the eigenvalues of $\alpha I_n + A$ are given by $\{\alpha+\sigma_1,\alpha+\sigma_2,\ldots,\alpha+\sigma_n\}$. Hence, the characteristic polynomial of ${CSEP}(D_{2n})$ is $x(x-1)(x-(n+1))^{n-1}(x -2n)(x-n)^{n-2}$.
 
\end{proof}

\begin{theorem}
 \label{CSEP-D2n-odd}
If $n$ is odd, then the Laplacian spectrum of $CSEP(D_{2n})$ is given by
    \[\displaystyle \begin{pmatrix}
0 & 1 & n & n+1 & 2n\\
 1 & 1 & n-2 & n-1 & 1\\
\end{pmatrix}.\]
\end{theorem}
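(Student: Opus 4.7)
The plan is to read the Laplacian spectrum directly off the factored characteristic polynomial produced in Theorem \ref{CSEP-odd-D_2n}. Since Theorem \ref{CSEP-odd-D_2n} already establishes
\[
\Phi(L(CSEP(D_{2n})), x) = x(x-1)(x-(n+1))^{n-1}(x-2n)(x-n)^{n-2},
\]
all that remains is to identify the distinct roots with their algebraic multiplicities. Because the Laplacian matrix is real symmetric, the algebraic multiplicity of each eigenvalue equals its geometric multiplicity, so no further spectral-theoretic work is needed.

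The first step is to invoke Theorem \ref{CSEP-odd-D_2n} and list the distinct roots: $0$, $1$, $n$, $n+1$, and $2n$, with multiplicities $1$, $1$, $n-2$, $n-1$, and $1$ respectively. As a sanity check I would verify that the multiplicities sum to the order of the graph: $1 + 1 + (n-2) + (n-1) + 1 = 2n = |D_{2n}|$, which matches. I would also note the standard consistency check that $0$ appears with multiplicity $1$, reflecting the fact that $CSEP(D_{2n})$ is connected (the identity vertex is dominant in the conjugacy superenhanced power graph, as observed in the structural result leading to Theorem \ref{CSEP-odd-D_2n}).

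There is no real obstacle here; the genuine work was done in Theorem \ref{CSEP-odd-D_2n}, where the block decomposition of $L(CSEP(D_{2n}))$ was reduced via row operations to a computation involving $|xI_{n-1}-A|$ and $|xI_{n}-B|$ with $A = nI_{n-1} - J_{n-1}$ and $B = (n+1)I_n - J_n$. Once that factorization is in hand, the present statement is a one-line translation of polynomial roots into spectral data, which I would present in the $2\times 5$ matrix form prescribed in the Preliminaries, completing the proof.
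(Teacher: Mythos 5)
Your proposal is correct and matches the paper's approach: the paper states this spectrum as an immediate consequence of the factored characteristic polynomial in Theorem \ref{CSEP-odd-D_2n}, exactly as you do by reading off the roots and multiplicities. Your added checks (multiplicities summing to $2n$ and the multiplicity of $0$ reflecting connectedness) are sound and consistent with the paper.
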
 

By {\cite[Corollary 4.2]{mohar1991spectrum}}, we have the following corollary.
\begin{corollary}
If $n$ is odd, then the number of spanning trees of $CSEP(D_{2n})$ are $ n^{n-2} (n+1)^{n-1}.$
\end{corollary}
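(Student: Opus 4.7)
The plan is to apply the Matrix-Tree Theorem in its Laplacian-spectral form (Mohar's Corollary 4.2): for a connected graph $\Gamma$ on $N$ vertices with nonzero Laplacian eigenvalues $\mu_1, \mu_2, \ldots, \mu_{N-1}$, the number of spanning trees equals
\[
\tau(\Gamma) \;=\; \frac{1}{N}\prod_{i=1}^{N-1} \mu_i.
\]
So the corollary reduces to a single arithmetic check, feeding the spectrum from Theorem \ref{CSEP-D2n-odd} into this formula.

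First I would note that $CSEP(D_{2n})$ has $N = 2n$ vertices and, for $n$ odd, the Laplacian spectrum established in Theorem \ref{CSEP-D2n-odd} lists the eigenvalue $0$ with multiplicity $1$ (so $CSEP(D_{2n})$ is connected, which justifies applying the theorem) together with the nonzero eigenvalues $1$ with multiplicity $1$, $n$ with multiplicity $n-2$, $n+1$ with multiplicity $n-1$, and $2n$ with multiplicity $1$. The total multiplicity of nonzero eigenvalues is $1 + (n-2) + (n-1) + 1 = 2n-1 = N-1$, as required.

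The next step is to multiply them out:
\[
\prod_{i=1}^{2n-1} \mu_i \;=\; 1^{1}\cdot n^{n-2}\cdot (n+1)^{n-1}\cdot (2n)^{1} \;=\; 2n\cdot n^{n-2}(n+1)^{n-1}.
\]
Dividing by $N = 2n$ as in Mohar's formula yields
\[
\tau\bigl(CSEP(D_{2n})\bigr) \;=\; \frac{1}{2n}\cdot 2n\cdot n^{n-2}(n+1)^{n-1} \;=\; n^{n-2}(n+1)^{n-1},
\]
which is the claimed count. There is essentially no obstacle here beyond careful bookkeeping of the multiplicities; the only thing to double-check is that the multiplicity of $0$ is exactly $1$ (confirming connectedness), so that Mohar's corollary applies directly with the product taken over the remaining $2n-1$ eigenvalues.
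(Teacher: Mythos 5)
Your proof is correct and is exactly the paper's route: the paper also obtains this count by feeding the spectrum of Theorem \ref{CSEP-D2n-odd} into Mohar's Corollary 4.2, i.e.\ $\tau = \frac{1}{2n}\cdot 1\cdot n^{n-2}(n+1)^{n-1}\cdot 2n = n^{n-2}(n+1)^{n-1}$. The multiplicity bookkeeping and the connectedness check are exactly what is needed, so nothing is missing.
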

\begin{theorem}\label{CSEP-even-D_2n}  
If $n$ is even, then the characteristic polynomial of the Laplacian matrix of ${CSEP}(D_{2n})$ is given by:
\[\Phi(L({CSEP}(D_{2n})), x) =x(x-1)^{2}(x-(\frac{n}{2}+1))^{n-1}(x -2n)(x-n)^{n-2}.\]
\end{theorem}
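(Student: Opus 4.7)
I plan to prove this by a block-Laplacian argument paralleling the proof of Theorem~\ref{CSEP-odd-D_2n}, adjusted to the conjugacy-class structure of $D_{2n}$ for even $n$. The key observation is that, although $D_{2n}$ now has an extra central element $a^{n/2}$ and the reflections split into two conjugacy classes $(ab)^{D_{2n}}$ and $(b)^{D_{2n}}$, each of size $n/2$, the set of $n-1$ non-identity rotations still forms a single clique in $CSEP(D_{2n})$ because all of them lie in $\langle a \rangle$ and are therefore mutually adjacent in the enhanced power graph. Consequently, the rotation block of the Laplacian can be kept as one piece (just as in the odd case), while the reflection part will split into two independent diagonal blocks corresponding to the two reflection classes.

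\textbf{Matrix setup.} Ordering the vertices as $e$, $a, a^2, \ldots, a^{n-1}$, then the $n/2$ odd-indexed reflections, then the $n/2$ even-indexed reflections, I would write
\[
L(CSEP(D_{2n})) = \begin{pmatrix}
2n-1 & -\mathbf{1}^T & -\mathbf{1}^T & -\mathbf{1}^T\\
-\mathbf{1} & A & O & O\\
-\mathbf{1} & O & B_1 & O\\
-\mathbf{1} & O & O & B_2
\end{pmatrix},
\]
where $A = n I_{n-1} - J_{n-1}$ (each non-identity rotation has degree $n-1$ and all pairs are mutually adjacent) and $B_1 = B_2 = \bigl(\tfrac{n}{2}+1\bigr) I_{n/2} - J_{n/2}$ (each reflection has degree $n/2$, being adjacent only to $e$ and to its conjugates). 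The zero off-diagonal blocks record that no non-identity rotation is adjacent to a reflection in $CSEP(D_{2n})$, and that no element of $(ab)^{D_{2n}}$ is adjacent to an element of $(b)^{D_{2n}}$, since two distinct reflections never generate a cyclic subgroup.

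\textbf{Row reduction and block determinants.} Applying the same row operation as in the odd case, namely $R_1 \to (x-1)R_1 - (R_2 + \cdots + R_{2n})$ to $xI - L$, exploits the zero row-sum property of $L$ to reduce row $1$ to $(x(x-2n), 0, \ldots, 0)$. Cofactor expansion along this row then yields
\[
(x-1)\,\Phi\bigl(L(CSEP(D_{2n})), x\bigr) = x(x-2n)\,\det(xI-A)\,\det(xI-B_1)\,\det(xI-B_2),
\]
since the remaining principal minor is block-diagonal. Each block has the form $\alpha I_k - J_k$, whose spectrum consists of the simple eigenvalue $\alpha-k$ together with $\alpha$ of multiplicity $k-1$, giving $\det(xI-A) = (x-1)(x-n)^{n-2}$ and $\det(xI-B_i) = (x-1)\bigl(x-\tfrac{n}{2}-1\bigr)^{n/2-1}$ for $i=1,2$. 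Substituting and dividing through by $(x-1)$ then assembles the asserted characteristic polynomial.

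\textbf{Anticipated obstacle.} Conceptually the argument is a direct adaptation of the odd-case proof, so no new technique is required. The main bookkeeping concern is correctly tracking the $(x-1)$ factors produced by each of $A$, $B_1$, $B_2$ and reconciling them with the $(x-1)$ coming from the row-scaling; the splitting of the reflection set into two separate conjugacy classes doubles the contribution of the Laplacian eigenvalue $1$, which is precisely what produces the $(x-1)^2$ in the final answer in place of the single $(x-1)$ that appeared in the odd case.
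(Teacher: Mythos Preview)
Your approach is essentially identical to the paper's: same vertex ordering (identity, then the $n-1$ non-identity rotations, then the two reflection classes), same block decomposition with $A=nI_{n-1}-J_{n-1}$ and two copies of $B=(\tfrac{n}{2}+1)I_{n/2}-J_{n/2}$, the same row operation $R_1\to(x-1)R_1-\sum_{i\ge2}R_i$, and the same block-diagonal determinant factorization. One remark: your computation (like the paper's own) actually produces the exponent $n-2$ on the factor $\bigl(x-\tfrac{n}{2}-1\bigr)$, in agreement with Corollary~\ref{CSEP-D2n-even}; the exponent $n-1$ in the displayed theorem is a typographical slip, so your final line ``assembles the asserted characteristic polynomial'' should be read modulo that correction.
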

\begin{proof}
The Laplacian matrix $L(CSEP(D_{2n}))$ is an $2n \times 2n$ matrix defined as follows, in which rows and columns are arranged in order by the vertices $e=a^n=b^{2}, a,a^2, \ldots, a^{n-1}$ followed by $ab,a^3b,\ldots, a^{n-1}b$ and finally $b,a^2b,\ldots,a^{n-2}b.$

\[L((CSEP(D_{2n}))  = \displaystyle \begin{pmatrix}
  2n-1 & -1  & \cdots  & -1 & -1 & \cdots & -1 & -1 & \cdots & -1 \\ 
 	-1  &  & A & & & \mathcal O & & & \mathcal O &  \\
 	\; \; \vdots & &  &  &  &  & \\ 
  -1 & & & & & & \\
 	-1  & &\mathcal O' & & & B & & &\mathcal O"&     \\ 

 	\; \; \vdots& & &  &  &  & \\   
   -1  & & & & & & \\
 	-1~ & &\mathcal O' &  & &\mathcal O" & & & B &   \\
  \;\; \vdots & & & & & & \\
  -1~ & & & & & & & \\
 	\end{pmatrix},
  \]
  where \[A =\displaystyle \begin{pmatrix}
	n-1 & -1 & \cdots & -1\\
        -1 & n-1 & \cdots & -1\\
        \vdots & \vdots & \ddots & \vdots\\
        -1 & -1 & \cdots & n-1\\
	\end{pmatrix}\] 
and	 
        \[ B =\displaystyle \begin{pmatrix}
		\frac{n}{2} & -1 & \cdots & -1\\
        -1 & \frac{n}{2} & \cdots & -1\\
        \vdots & \vdots & \ddots & \vdots\\
        -1 & -1 & \cdots & \frac{n}{2}\\
	\end{pmatrix},
 \] \\
such that the orders of $A$ and $B$ are $n-1\times n-1$ and $\frac{n}{2}\times \frac{n}{2}$, respectively. Also, $\mathcal{O}, \mathcal{O'}$ and $\mathcal{O''}$ are the zero matrices of appropriate sizes. Observe that $A = nI_{n-1} - J_{n-1}$ and $B = (\frac{n}{2}+1)I_{\frac{n}{2}} - J_{\frac{n}{2}}$.
 
 Then the characteristic polynomial of $L({CSEP}(D_{2n}))$ is
 
\[\Phi(L(CSEP(D_{2n})), x)  = 
 \displaystyle \begin{vmatrix}
  x-(2n-1)  & 1 & \cdots& 1 & 1 & \cdots& 1 & 1 & \cdots & 1 \\ 
 	1  &  & xI_{n-1}-A &  & & \mathcal O &  &  & \mathcal O& \\
 	\; \; \vdots &  &  &  &  &  & \\ 
  1 &  & & & & & \\
 1 & &\mathcal O' & & & xI_{2n}-B & & &\mathcal O"&     \\ 

 	\; \; \vdots& & &  &  &  &  \\ 
   1 & & & & & & \\
 	1~ & &\mathcal O' &  & &\mathcal O" & & & xI_{2n}-B &   \\
  \;\; \vdots & & & & & & \\
  1~ & & & & & & & \\
 	\end{vmatrix}.
  \] \\
After applying the following row operation
	\begin{itemize}
		\item $R_1 \rightarrow (x -1)R_1 - R_2  - \cdots - R_{2n}.$	
	\end{itemize}
we obtain 
\[
\Phi(L(CSEP(D_{2n})), x) = x(x - 2n) \frac{1}{(x - 1)}
\det \begin{pmatrix}
(xI_{n-1} - A) & \mathcal O & \mathcal O \\
\mathcal O' & (xI_{n} - B) & \mathcal O'' \\
\mathcal O' & \mathcal O'' & (xI_{n} - B)
\end{pmatrix}
\]
\[ = \frac{x(x - 2n)}{(x - 1)} \cdot {|xI_{n-1} - A|} \cdot {|xI_{\frac{n}{2}} - B|}^2\
.\]
Using a similar approach to the proof of Theorem \ref{CSEP-odd-D_2n},
we get the characteristic polynomial of ${CSEP}(D_{2n})$ for even $n$.
\end{proof}
\begin{corollary}\label{CSEP-D2n-even}
If $n$ is even, then the Laplacian spectrum of $CSEP(D_{2n})$ is given by
    \[\displaystyle \begin{pmatrix}
0 & 1 & \frac{n}{2}+1 & n & 2n\\
 1 & 2 & n-2 & n-2 & 1\\
\end{pmatrix}.\]
\end{corollary}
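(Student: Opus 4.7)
The plan is immediate from Theorem \ref{CSEP-even-D_2n}, since the Laplacian spectrum of a graph is precisely the multiset of roots of the characteristic polynomial of its Laplacian matrix. I would therefore read off the distinct roots of $\Phi(L(CSEP(D_{2n})),x)$ together with their multiplicities: the roots are $0$, $1$, $\tfrac{n}{2}+1$, $n$, and $2n$, with respective multiplicities $1$, $2$, $n-2$, $n-2$, and $1$. Arranging these in the required $2\times 5$ array gives the corollary.

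Before writing this down I would run a dimension sanity check: $L(CSEP(D_{2n}))$ has order $2n$, so the five multiplicities must sum to $2n$, which they do since $1+2+(n-2)+(n-2)+1=2n$. I would then justify each multiplicity by inspecting the block structure built in the proof of Theorem \ref{CSEP-even-D_2n}. The block $A=nI_{n-1}-J_{n-1}$ has eigenvalue $n$ with multiplicity $n-2$ and eigenvalue $1$ once, while each of the two copies of $B=(\tfrac{n}{2}+1)I_{n/2}-J_{n/2}$ has eigenvalue $\tfrac{n}{2}+1$ with multiplicity $\tfrac{n}{2}-1$ and eigenvalue $1$ once, giving $n-2$ copies of $\tfrac{n}{2}+1$ in total.

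The one place that requires care is the multiplicity of the eigenvalue $1$. The factor $\tfrac{x(x-2n)}{x-1}$ appearing in the expression for $\Phi(L(CSEP(D_{2n})),x)$ cancels one factor of $(x-1)$ contributed by the product of block determinants, and one must check that the remaining count of $(x-1)$ factors is exactly two. Since $A$ contributes one factor of $(x-1)$ and each of the two $B$ blocks contributes another, the total is three, and after the cancellation the multiplicity of $1$ is indeed $2$ as stated. No other obstacle is expected; the corollary then follows by listing the eigenvalues in increasing order along with the recorded multiplicities.
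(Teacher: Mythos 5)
Your proposal is correct and follows essentially the same route as the paper: the corollary is read off from the characteristic polynomial of Theorem \ref{CSEP-even-D_2n}, whose factorization $\frac{x(x-2n)}{x-1}\,\lvert xI_{n-1}-A\rvert\,\lvert xI_{n/2}-B\rvert^{2}$ with $A=nI_{n-1}-J_{n-1}$ and $B=(\tfrac{n}{2}+1)I_{n/2}-J_{n/2}$ yields exactly the eigenvalues and multiplicities you list. Note that your block-by-block count (together with your dimension check $1+2+(n-2)+(n-2)+1=2n$) in fact exposes a typo in the printed statement of Theorem \ref{CSEP-even-D_2n}, where the exponent of $\bigl(x-(\tfrac{n}{2}+1)\bigr)$ appears as $n-1$ (making the printed polynomial of degree $2n+1$); the correct exponent is $n-2$, as your argument shows and as the corollary records.
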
 

By {\cite[Corollary 4.2]{mohar1991spectrum}}, we have the following corollary.
\begin{corollary}
If $n$ is even, then the number of spanning trees of $CSEP(D_{2n})$ are  $n^{n-2} (\frac{n}{2}+1)^{n-2}.$
\end{corollary}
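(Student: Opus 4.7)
The plan is to invoke the matrix-tree theorem in its Laplacian form, as recorded in \cite[Corollary 4.2]{mohar1991spectrum}, which states that for a connected graph $\Gamma$ on $N$ vertices with non-zero Laplacian eigenvalues $\mu_1, \mu_2, \ldots, \mu_{N-1}$, the number of spanning trees equals $\tfrac{1}{N}\prod_{i=1}^{N-1}\mu_i$. Since $CSEP(D_{2n})$ has $2n$ vertices and is connected (the identity is a dominant vertex, as is visible from the join structure in the theorem describing $CSEP(D_{2n})$), this formula applies directly.

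Next I would simply read the non-zero part of the Laplacian spectrum off of Corollary \ref{CSEP-D2n-even}. For $n$ even, the non-zero Laplacian eigenvalues are $1$ with multiplicity $2$, $\tfrac{n}{2}+1$ with multiplicity $n-2$, $n$ with multiplicity $n-2$, and $2n$ with multiplicity $1$, accounting for the full count $2 + (n-2) + (n-2) + 1 = 2n-1$ of non-zero eigenvalues.

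Finally I would plug these into the matrix-tree formula:
\[
\tau(CSEP(D_{2n})) \;=\; \frac{1}{2n}\,\cdot\, 1^{2}\,\cdot\, \left(\tfrac{n}{2}+1\right)^{n-2}\cdot\, n^{n-2}\,\cdot\, 2n \;=\; n^{n-2}\left(\tfrac{n}{2}+1\right)^{n-2},
\]
where the factor $2n$ from the eigenvalue $2n$ cancels the $\tfrac{1}{2n}$ prefactor, giving exactly the claimed formula. There is no real obstacle here: the whole content of the corollary is the eigenvalue computation already performed in Theorem \ref{CSEP-even-D_2n} and Corollary \ref{CSEP-D2n-even}; the only thing to be careful about is correctly matching the multiplicities and verifying the cancellation, which is routine.
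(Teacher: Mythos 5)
Your proposal is correct and follows exactly the paper's route: the paper likewise deduces the spanning-tree count by applying the matrix-tree theorem (\cite[Corollary 4.2]{mohar1991spectrum}) to the Laplacian spectrum obtained in Corollary \ref{CSEP-D2n-even}, and your multiplicity bookkeeping and the cancellation of the factor $2n$ with the $\tfrac{1}{2n}$ prefactor are accurate. Nothing further is needed.
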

\begin{theorem}
For the generalized quaternion group $Q_{4n}$, the structure of conjugacy superenhanced power graph of $Q_{4n}$ is given by
\[
CSEP(Q_{4n})= 
\begin{dcases}
    K_2 \lor(K_{n-1}\cup K_2)[K_1,K_2,K_2,\ldots,K_2,K_n,K_n], \text{ if n is odd}   \\
   K_2 \lor(K_{n-1}\cup K_1 \cup K_1)[K_1,K_2,K_2,\ldots,K_2,K_n,K_n], \text{ if n is even}. \\
\end{dcases}
\]    
\end{theorem}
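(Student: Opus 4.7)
The plan is to mirror the argument of the dihedral case above: first enumerate the conjugacy classes of $Q_{4n}$, next read off the adjacency structure of $\mathcal{P}_E(Q_{4n})$ from the list of maximal cyclic subgroups recorded in the preliminaries, then translate those data into adjacencies of $CSEP(Q_{4n})$, and finally display the result as a generalised composition.

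For the conjugacy classes I would use the identities $a \cdot a^i b \cdot a^{-1} = a^{i+2}b$ and $b \cdot a^i b \cdot b^{-1} = a^{-i}b$ to show that the $2n$ reflections split into $(b)^{Q_{4n}} = \{a^{2j}b : 0 \le j \le n-1\}$ and $(ab)^{Q_{4n}} = \{a^{2j+1}b : 0 \le j \le n-1\}$, each of size $n$, independently of the parity of $n$; the powers of $a$ yield the two central singletons $\{e\}, \{a^n\}$ together with the $n-1$ classes $\{a^i, a^{-i}\}$ for $1 \le i \le n-1$, each of size $2$. Using the maximal cyclic subgroups $\langle a \rangle$ of order $2n$ and $\langle a^i b \rangle = \{e, a^n, a^i b, a^{n+i}b\}$ of order $4$, I would then observe that in $\mathcal{P}_E(Q_{4n})$ the two central elements $e, a^n$ are universal, any two powers of $a$ are adjacent, and the only $\mathcal{P}_E$-neighbour of $a^i b$ outside $\{e, a^n\}$ is $a^{n+i}b$.

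From this several items for $CSEP(Q_{4n})$ are immediate: (i) $\{e, a^n\}$ is universal and induces $K_2$; (ii) the $n-1$ non-central classes contained in $\langle a \rangle$ are pairwise joined and each is internally a $K_2$, together giving the clique $K_{n-1}[K_2, \ldots, K_2]$; (iii) each reflection class is internally a $K_n$; (iv) no reflection class is joined to a non-central $\langle a \rangle$-class, because no non-central power of $a$ shares a maximal cyclic subgroup with any reflection. The decisive step is the cross-adjacency between $(b)^{Q_{4n}}$ and $(ab)^{Q_{4n}}$: since $a^{2j}b$ is $\mathcal{P}_E$-adjacent to $a^{n+2j}b$, the two reflection classes are joined in $CSEP$ exactly when $n+2j$ has parity opposite to $2j$, that is, exactly when $n$ is odd.

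Assembling the pieces, after peeling off the central $K_2$ by the outer join, the remaining vertices split as the disjoint union of the $\langle a \rangle$-part (modelled by the template $K_{n-1}$ filled by $K_2$-blocks) and the reflection part, which is $K_2[K_n, K_n]$ when $n$ is odd and $(K_1 \cup K_1)[K_n, K_n]$ when $n$ is even. The expected main obstacle is the parity bookkeeping at steps (iv) and (v): one must keep straight whether the $\mathcal{P}_E$-partner $a^{n+i}b$ of a reflection $a^i b$ lies in the same class or the opposite one, and it is precisely this that splits the conclusion into two different generalised compositions. The remainder is a direct combinatorial translation of the adjacency list, paralleling the dihedral proof given earlier.
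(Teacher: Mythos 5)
Your proposal is correct and follows essentially the same route as the paper: enumerate the conjugacy classes of $Q_{4n}$, read off adjacency in $\mathcal{P}_E(Q_{4n})$ from the maximal cyclic subgroups $\langle a\rangle$ and $\langle a^ib\rangle=\{e,a^n,a^ib,a^{n+i}b\}$, and settle the join between the two reflection classes by the parity of $n+i$, which is exactly the paper's case distinction between $n$ odd and $n$ even. Your write-up is in fact slightly more explicit than the paper's (the conjugation formulas and the observation that no reflection class is joined to a non-central class of $\langle a\rangle$ are spelled out); the only point both treatments leave implicit is the convention, standard in this super graph literature, that each conjugacy class induces a clique in the super graph, which is what makes the reflection blocks $K_n$ rather than independent sets in the odd case.
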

\begin{proof}
 For the generalized quaternion group $Q_{4n}$, we have
 \[\ 
 Z(Q_{4n}) =
    \{e,a^{n}\}
    \]    
The conjugacy classes of the elements of generalized quaternion group $Q_{4n}$ are: 
 \begin{center}
$(e)^{Q_{4n}}=\{e\},~~   (a^{n})^{Q_{4n}}= \{a^n\},$

$(a^i)^{Q_{4n}} =\{a^i,a^{2n-i}\} \text{ for } 1\leq i \leq n-1, i \neq n,$

$ C_1= (a^{2i-1}b)^{Q_{4n}} = \{a^{2i-1}b : 1 \leq i\leq n \}, \text{ for } 1\leq i \leq n$

$C_2= (a^{2i}b)^{Q_{4n}} =\{a^{2i}b : 1 \leq i \leq n\}, \text{ for } 1 \leq i \leq n.$
\end{center}
We know that all the maximal cyclic subgroups of $Q_{4n}$ are $\langle a^ib \rangle = \{e, a^ib,a^n,a^{i+n}b\}$, for $1 \leq i \leq n$   and $\langle a\rangle = \{a^i: 1 \leq i \leq 2n\}.$ 
Note that all the elements of the form $\{a^i\}$ are adjacent to each other in $\mathcal{P}_E(Q_{4n})$. Furthermore, the identity element and the element $a^n$ are adjacent to all other vertices of $\mathcal{P}_E(Q_{4n})$. 
Now consider the cases

\noindent(i) \textit{ If $n$ is odd}.
    Notice that $ab\in C_1$ and $ab$ is adjacent to $a^{n+1}b$  in $\mathcal{P}_E(Q_{4n}).$ Also, $a^{n+1}b \in C_2.$ It implies that each element of $C_1$ is adjacent to every element of $C_2$ in $CSEP(Q_{4n})$.

\noindent(ii) \textit{ If $n$ is even}. In this case, $n+i$ remains even or odd according to $i$ being even or odd, respectively. It follows that no element of $C_1$ is adjacent to any element of $C_2$ in $CSEP(Q_{4n})$.

Hence, we get the above mentioned graphs.
\end{proof}

\begin{theorem}\label{CSEP-odd-Q_4n}  
If $n$ is odd, then the characteristic polynomial of the Laplacian matrix of ${CSEP}(Q_{4n})$ is given by
\[\Phi(L({CSEP}(Q_{4n})), x) = x(x-2)(x-(2n+2))^{2n-1}(x-4n)^2(x-2n)^{2n-3}.\]
\end{theorem}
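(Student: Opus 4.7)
The plan is to mirror the block-matrix computation of Theorem \ref{CSEP-odd-D_2n}, with one extra reduction step to accommodate the two-element centre $Z(Q_{4n})=\{e,a^n\}$. From the preceding structure theorem one has, for odd $n$, $CSEP(Q_{4n}) \cong K_2 \vee (K_{2n-2} \cup K_{2n})$: the non-central rotations form a clique $K_{2n-2}$; the two reflection classes $C_1, C_2$ (each a clique of size $n$) are completely interconnected via the edge $ab \sim a^{n+1}b$ in $\mathcal P_E(Q_{4n})$; and $e, a^n$ are dominant. Ordering the vertices as $e, a^n,\, a, a^2, \ldots, a^{n-1}, a^{n+1}, \ldots, a^{2n-1}$, followed by the $2n$ reflections, this yields the block form
\[
L(CSEP(Q_{4n})) = \begin{pmatrix}
4n I_2 - J_2 & -J_{2,\,2n-2} & -J_{2,\,2n}\\
-J_{2n-2,\,2} & A & \mathcal O\\
-J_{2n,\,2} & \mathcal O & B
\end{pmatrix},
\]
where $A = 2nI_{2n-2}-J_{2n-2}$ and $B = (2n+2)I_{2n}-J_{2n}$ are the Laplacians of the rotation- and reflection-cliques, adjusted for the degree contribution of the centre.

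To evaluate $\Phi(L,x) = \det(xI-L)$, I would use a Schur-complement reduction on the top $2\times 2$ centre block (which is the two-dimensional analogue of the single pivot-row reduction performed in Theorem \ref{CSEP-odd-D_2n}). The block-diagonal tail $M := \mathrm{diag}(xI_{2n-2}-A,\, xI_{2n}-B)$ has the all-ones vector of each block as a common eigenvector with eigenvalue $x-2$, so a short calculation gives $J_{2,\,4n-2}\,M^{-1}\,J_{4n-2,\,2} = \tfrac{4n-2}{x-2}J_2$. Hence the Schur complement of the centre is $(x-4n)I_2 + \tfrac{x-4n}{x-2}J_2 = \tfrac{x-4n}{x-2}\bigl[(x-2)I_2 + J_2\bigr]$, whose determinant equals $\tfrac{x(x-4n)^2}{x-2}$. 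Combining with $\det M$ yields
\[
\Phi(L,x) = \frac{x(x-4n)^2}{x-2} \cdot \det(xI_{2n-2}-A)\cdot \det(xI_{2n}-B),
\]
directly parallel to the formula $\tfrac{x(x-2n)}{x-1}\det(xI-A)\det(xI-B)$ obtained in Theorem \ref{CSEP-odd-D_2n}.

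The final step is routine: using the standard identity that $\alpha I_k - J_k$ has eigenvalue $\alpha-k$ (once) and $\alpha$ (multiplicity $k-1$), the two tail determinants evaluate to $(x-2)(x-2n)^{2n-3}$ and $(x-2)(x-(2n+2))^{2n-1}$, respectively. Substituting and cancelling one factor of $(x-2)$ gives exactly $x(x-2)(x-(2n+2))^{2n-1}(x-4n)^2(x-2n)^{2n-3}$. The main technical obstacle, compared to Theorem \ref{CSEP-odd-D_2n}, is precisely this handling of the two-element centre: the additional factor of $(x-4n)$ comes from the antisymmetric eigenvector $(1,-1,0,\dots,0)$ supported on $\{e,a^n\}$, and this is what upgrades the $(x-4n)$ appearing once in the dihedral analogue to $(x-4n)^2$ here, forcing the Schur step (or, equivalently, an additional row operation $R_2 \to R_2 - R_1$ immediately after the standard $R_1\to R_1+R_2+\cdots+R_{4n}$) as the new ingredient of the proof.
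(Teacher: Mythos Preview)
Your proof is correct and follows essentially the same path as the paper: you set up the identical block Laplacian with $A=2nI_{2n-2}-J_{2n-2}$ and $B=(2n+2)I_{2n}-J_{2n}$, reduce out the two centre rows to obtain $\Phi(L,x)=\frac{x(x-4n)^2}{x-2}\det(xI-A)\det(xI-B)$, and then evaluate the tail determinants. The only cosmetic difference is that you package the reduction as a Schur complement on the $2\times2$ centre block, whereas the paper performs it as two successive row operations $R_1\to(x-1)R_1-\sum_{i\ge 2}R_i$ and $R_2\to(x-2)R_2-\sum_{i\ge 3}R_i$; these are equivalent manipulations yielding the same intermediate factorisation.
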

\begin{proof}
The Laplacian matrix $L(CSEP(Q_{4n}))$ is an $4n \times 4n$ matrix defined as follows, in which rows and columns are arranged in order by the vertices $e = a^{2n}, a^{n}, a, a^2, \ldots, a^{n-1},a^{n+1}, \ldots, a^{2n-1}$ followed by $ab,a^2b,\ldots, a^{2n}b=b $.   
\[
L(CSEP(Q_{4n})) = \begin{pmatrix}
4n-1 & -1 & -1 & \cdots & -1 & -1 & \cdots & -1 \\
-1 & 4n-1 & -1 & \cdots & -1 & -1 & \cdots & -1 \\
-1 & -1 & &  A & & & \mathcal{O} & \\
\vdots & \vdots \\
-1 & -1 &  & &  &  &  & \\
-1 & -1 &  & \mathcal{O}' & & \ & B & \\
\vdots & \vdots \\
-1 & -1 & & & & & & & \\
\end{pmatrix}
,\]
where \[A =\displaystyle \begin{pmatrix}
	2n-1 & -1 & \cdots & -1\\
        -1 & 2n-1 & \cdots & -1\\
        \vdots & \vdots & \ddots & \vdots\\
        -1 & -1 & \cdots & 2n-1\\
	\end{pmatrix}\] 
and	 
        \[ B =\displaystyle \begin{pmatrix}
		2n+1 & -1 & \cdots & -1\\
        -1 & 2n+1 & \cdots & -1\\
        \vdots & \vdots & \ddots & \vdots\\
        -1 & -1 & \cdots & 2n+1\\
	\end{pmatrix},
 \] \\
such that the orders of $A$ and $B$ are $2n-2\times 2n-2$ and $2n\times2n$, respectively. Also, $\mathcal{O}$ and $\mathcal{O'}$ are the zero matrices of appropriate sizes. Note that $A = 2nI_{2n-2} - J_{2n-2}$ and $B = (2n+2)I_{2n} - J_{2n}$.
 
Then the characteristic polynomial of $L({CSEP}(Q_{4n}))$ is
\[
\Phi(L(CSEP(Q_{4n})), x) =
 \begin{vmatrix}
x-(4n-1) & 1 & 1 & \cdots & 1 & 1 & \cdots & 1 \\
1 & x-(4n-1) & 1 & \cdots & 1 & 1 & \cdots & 1 \\
1 & 1 & &  xI_{2n-2}-A & & & \mathcal{O} & \\
\vdots & \vdots \\
1 & 1 &  & &  &  &  & \\
1 & 1 &  & \mathcal{O}' & & \ & xI_{2n}-B & \\
\vdots & \vdots \\
1 & 1 & & & & & & & \\
\end{vmatrix}
.\]
After applying the following row operations consecutively
	\begin{itemize}
		\item $R_1 \rightarrow (x -1)R_1 - R_2  - \cdots - R_{4n}.$
		\item $R_2 \rightarrow (x -2)R_2 - R_3 - \cdots - R_{4n}.$	
	\end{itemize}
we obtain, 
\[
\Phi(L(CSEP(Q_{4n})), x) = x(x - 4n)^2 \frac{1}{(x - 2)}
\det \begin{pmatrix}
(xI_{2n-2} - A) & \mathcal{O} \\
\mathcal{O}' & (xI_{2n} - B)
\end{pmatrix}
\]
\[ = \frac{x{(x - 4n)}^2}{(x - 2)} \cdot {|xI_{2n-2} - A|} \cdot {|xI_{2n} - B|}\
.\]
Using a similar approach to the proof of Theorem \ref{CSEP-odd-D_2n},
we obtain  the spectrum of the graph as proposed.
\end{proof}
\begin{corollary} \label{CSEP-Q4n-odd}
If $n$ is odd, then the Laplacian spectrum for the $CSEP(Q_{4n})$ is given by
    \[\displaystyle \begin{pmatrix}
0 & 2 & 2n & 2n+2 & 4n\\
 1 & 1 & 2n-3 & 2n-1 & 2\\
\end{pmatrix}.\]
\end{corollary}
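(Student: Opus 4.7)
The plan is to read off the Laplacian spectrum directly from the characteristic polynomial established in Theorem \ref{CSEP-odd-Q_4n}. Recall that the Laplacian eigenvalues of a graph are precisely the roots of $\Phi(L(\Gamma),x)$, counted with algebraic multiplicity, and for the Laplacian these multiplicities coincide with the geometric multiplicities since $L$ is symmetric.

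First I would write down the factorization
\[
\Phi(L(CSEP(Q_{4n})), x) = x(x-2)(x-(2n+2))^{2n-1}(x-4n)^2(x-2n)^{2n-3}
\]
and identify the five distinct roots $0,\,2,\,2n,\,2n+2,\,4n$, with respective multiplicities $1,\,1,\,2n-3,\,2n-1,\,2$. Next I would verify consistency in two ways: the multiplicities sum to $1+1+(2n-3)+(2n-1)+2 = 4n = |Q_{4n}|$, which matches the order of $L(CSEP(Q_{4n}))$, and the root $0$ appears with multiplicity $1$, reflecting the fact that $CSEP(Q_{4n})$ is connected (which is already visible from the join structure $K_2 \vee \cdots$ in the preceding structure theorem).

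Since for odd $n \geq 3$ all five numbers $0, 2, 2n, 2n+2, 4n$ are genuinely distinct, no collapsing of multiplicities occurs and the spectrum can be displayed in the stated tabular form
\[
\begin{pmatrix}
0 & 2 & 2n & 2n+2 & 4n\\
1 & 1 & 2n-3 & 2n-1 & 2
\end{pmatrix}.
\]
There is no substantive obstacle here: the entire content is encoded in Theorem \ref{CSEP-odd-Q_4n}, and the corollary is merely a bookkeeping step separating the distinct eigenvalues from their multiplicities and performing the sanity check on the total count.
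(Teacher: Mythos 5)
Your proposal is correct and matches the paper's treatment: the corollary is stated as an immediate consequence of Theorem \ref{CSEP-odd-Q_4n}, obtained exactly as you do by reading off the distinct roots and multiplicities of the factored characteristic polynomial. Your added sanity checks (multiplicities summing to $4n$ and the simple zero eigenvalue reflecting connectedness) are harmless extras beyond what the paper records.
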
 
By {\cite[Corollary 4.2]{mohar1991spectrum}}, we have the following corollary.

\begin{corollary}
If $n$ is odd, then the number of spanning trees of $CSEP(Q_{4n})$ are $2^{2n} n^{2n-2} (2n+2)^{2n-1}.$
\end{corollary}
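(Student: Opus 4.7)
The plan is to apply the matrix tree theorem in the form stated as Corollary 4.2 of \cite{mohar1991spectrum}: for a connected graph $\Gamma$ on $N$ vertices with Laplacian eigenvalues $0 = \mu_N \leq \mu_{N-1} \leq \cdots \leq \mu_1$, the number of spanning trees equals $\frac{1}{N}\prod_{i=1}^{N-1}\mu_i$. Since all nonzero Laplacian eigenvalues of $CSEP(Q_{4n})$ have already been identified in Corollary \ref{CSEP-Q4n-odd}, the corollary reduces to a single arithmetic simplification.

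First I would record the data. The graph $CSEP(Q_{4n})$ has $N = 4n$ vertices, and by Corollary \ref{CSEP-Q4n-odd} its nonzero Laplacian eigenvalues are $2$ (multiplicity $1$), $2n$ (multiplicity $2n-3$), $2n+2$ (multiplicity $2n-1$), and $4n$ (multiplicity $2$). A quick check $1 + (2n-3) + (2n-1) + 2 = 4n-1$ confirms the multiplicities account for $N-1$ nonzero eigenvalues, as they must for a connected graph.

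Next I would substitute into Mohar's formula to obtain
\[
\tau(CSEP(Q_{4n})) \;=\; \frac{1}{4n}\cdot 2 \cdot (2n)^{2n-3} \cdot (2n+2)^{2n-1} \cdot (4n)^{2}.
\]
The final step is algebraic simplification. The prefactor collapses via $\frac{(4n)^2}{4n} = 4n$, yielding $8n \cdot (2n)^{2n-3} \cdot (2n+2)^{2n-1}$. Writing $(2n)^{2n-3} = 2^{2n-3} n^{2n-3}$ and combining with $8n = 2^{3}n$ gives $2^{2n} n^{2n-2} (2n+2)^{2n-1}$, which matches the claim.

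There is no real obstacle here; the content is entirely in the preceding spectral computation, and the corollary is a bookkeeping exercise. The only place a reader might stumble is in verifying that the cancellation of the factor $4n$ against one copy of $(4n)^2$ recombines cleanly with $(2n)^{2n-3}$ to produce the stated form, which the computation above makes transparent.
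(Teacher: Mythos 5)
Your proposal is correct and follows exactly the route the paper intends: the paper simply cites Mohar's Corollary 4.2 (the matrix tree theorem via Laplacian eigenvalues) applied to the spectrum from Corollary \ref{CSEP-Q4n-odd}, and your substitution and simplification $\frac{1}{4n}\cdot 2\cdot(2n)^{2n-3}\cdot(2n+2)^{2n-1}\cdot(4n)^{2}=2^{2n}n^{2n-2}(2n+2)^{2n-1}$ is the same computation, including the correct multiplicity check $1+(2n-3)+(2n-1)+2=4n-1$.
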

\begin{theorem}\label{CSEP-even-Q_4n}  
If $n$ is even, then the characteristic polynomial of the Laplacian matrix of ${CSEP}(Q_{4n})$ is given by
\[\Phi(L({CSEP}(Q_{4n})), x) = x(x-2)^2(x-(n+2))^{2n-2}(x-4n)^2(x-2n)^{2n-3}.\]
\end{theorem}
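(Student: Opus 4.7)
The plan is to mirror the proof of Theorem \ref{CSEP-odd-Q_4n}, adjusted for the fact that $C_1$ and $C_2$ are not adjacent in $CSEP(Q_{4n})$ when $n$ is even. First I would order the rows and columns of $L(CSEP(Q_{4n}))$ as $e, a^n$ (the two dominating vertices), then the $2n-2$ non-central powers $a, a^2, \ldots, a^{n-1}, a^{n+1}, \ldots, a^{2n-1}$, then the $n$ elements of $C_1$, and finally the $n$ elements of $C_2$. From the structure theorem one reads off that $e$ and $a^n$ have degree $4n-1$, each non-central power of $a$ has degree $2n-1$ (adjacent only to the remaining elements of $\langle a\rangle$), and each element of $C_1\cup C_2$ has degree $n+1$ ($n-1$ neighbours in its own class, plus $e$ and $a^n$). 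The key new feature for even $n$ is that there is no $CSEP$-edge between $C_1$ and $C_2$, so the Laplacian takes a block form with four diagonal blocks: a $2\times 2$ block with diagonal $4n-1$ and off-diagonal $-1$ for $\{e, a^n\}$, the block $A = 2nI_{2n-2} - J_{2n-2}$ for the non-central powers, and two independent copies of $B = (n+2)I_n - J_n$ for $C_1$ and $C_2$. All other off-diagonal blocks are zero, except for the $-1$ entries in the rows and columns linking the two dominating vertices to every other vertex.

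Next I would apply the same two row operations as in Theorem \ref{CSEP-odd-Q_4n}: $R_1 \to (x-1)R_1 - R_2 - \cdots - R_{4n}$ followed by $R_2 \to (x-2)R_2 - R_3 - \cdots - R_{4n}$. Since the row sums of $L$ vanish, every row of $xI - L$ sums to $x$, so a direct computation (identical to the one in Theorem \ref{CSEP-odd-Q_4n}) shows that the first operation replaces row $1$ by $(x(x-4n), 0, \ldots, 0)$ and multiplies the determinant by $x-1$, while the second replaces row $2$ by $(0, (x-1)(x-4n), 0, \ldots, 0)$ and multiplies by $x-2$. Expanding along these two rows successively reduces the problem to
\[
(x-1)(x-2)\,\Phi(L(CSEP(Q_{4n})), x) = x(x-1)(x-4n)^2 \det \begin{pmatrix} xI_{2n-2}-A & \mathcal{O} & \mathcal{O}\\ \mathcal{O}' & xI_n - B & \mathcal{O}''\\ \mathcal{O}' & \mathcal{O}'' & xI_n - B \end{pmatrix}.
\]

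Finally, the block-diagonal determinant factors as $\det(xI_{2n-2}-A) \cdot \det(xI_n - B)^2$, and the standard eigenvalues of $\alpha I_m - J_m$ give $\det(xI_{2n-2}-A) = (x-2)(x-2n)^{2n-3}$ and $\det(xI_n - B) = (x-2)(x-(n+2))^{n-1}$. Substituting these and cancelling $(x-1)(x-2)$ from both sides yields the claimed polynomial. The main obstacle is the structural one at the outset: for even $n$ the classes $C_1$ and $C_2$ must be treated as two independent $K_n$'s rather than the single $K_{2n}$ that appears in the odd case. This is what replaces the factor $(x-(2n+2))^{2n-1}$ (one block of the form $(2n+2)I_{2n}-J_{2n}$) by $(x-(n+2))^{2n-2}$ (two blocks of the form $(n+2)I_n-J_n$) and raises the multiplicity of the eigenvalue $2$ from one to two, since the block-diagonal determinant now carries three factors of $x-2$ against the single $x-2$ in the denominator.
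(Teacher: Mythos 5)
Your proposal is correct and follows essentially the same route as the paper's proof: the same vertex ordering ($e, a^n$, the non-central powers, then $C_1$ and $C_2$ as two separate $K_n$ blocks), the same block decomposition with $A=2nI_{2n-2}-J_{2n-2}$ and two copies of $B=(n+2)I_n-J_n$, the same two row operations, and the same eigenvalue computation for $\alpha I_m - J_m$. One harmless slip: after the second row operation the first entry of row $2$ is $x-4n$, not $0$, but this does not affect anything since expanding along the transformed row $1$ deletes that column, and your final identity and factorization are exactly those of the paper.
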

\begin{proof}
The Laplacian matrix $L(CSEP(Q_{4n}))$ is an $4n \times 4n$ matrix defined as follows, in which the rows and columns are arranged in order by the vertices $e = a^{2n}, a^{n}, a, a^2, \ldots, a^{n-1},a^{n+1}, \ldots, a^{2n-1}$ followed by $ab,a^3b,\ldots, a^{2n-1}b $ and then finally $ a^2b, a^4b, \ldots, a^{2n}b=b.$   
\[
L(CSEP(Q_{4n})) = \begin{pmatrix}
4n-1 & -1 & -1 & \cdots & -1 & -1 & \cdots & -1 & -1 & \cdots~~~~ -1 \\
-1 & 4n-1 & -1 & \cdots & -1 & -1 & \cdots & -1 & -1 & \cdots~~~~  -1 \\
-1 & -1 & &  A & & & \mathcal{O} & & & \mathcal{O} \\
\vdots & \vdots \\
-1 & -1 &  & &  &  &  & \\
-1 & -1 &  & \mathcal{O}' & & \ & B & & & \mathcal{O}''  \\
\vdots & \vdots \\
-1 & -1 & & & & & & & \\
-1 & -1 & & & & & & & \\
\vdots & \vdots \\
-1 & -1 &  & \mathcal{O}' & & \ & \mathcal{O}''  & & & B \\
\end{pmatrix}
,\]
where \[A =\displaystyle \begin{pmatrix}
	2n-1 & -1 & \cdots & -1\\
        -1 & 2n-1 & \cdots & -1\\
        \vdots & \vdots & \ddots & \vdots\\
        -1 & -1 & \cdots & 2n-1\\
	\end{pmatrix}\] 
and	 
        \[ B =\displaystyle \begin{pmatrix}
		n+1 & -1 & \cdots & -1\\
        -1 & n+1 & \cdots & -1\\
        \vdots & \vdots & \ddots & \vdots\\
        -1 & -1 & \cdots & n+1\\
	\end{pmatrix},
 \] \\
such that the orders of $A$ and $B$ are $2n-2\times 2n-2$ and $n\times n$, respectively. Also, $\mathcal{O}, \mathcal{O'}$ and $\mathcal{O''}$  are the zero matrices of appropriate sizes. Observe that $A = 2nI_{2n-2} - J_{2n-2}$ and $B = (n+2)I_{n} - J_{n}$.
 
 Then the characteristic polynomial of $L({CSEP}(Q_{4n}))$ is
\[
\Phi(L(CSEP(Q_{4n})), x) =
\begin{vmatrix}
x-(4n-1) & 1 & 1 & \cdots & 1 & 1 & \cdots & 1 & 1 & \cdots~~~~ 1 \\
1 & x-(4n-1) & 1 & \cdots & 1 & 1 & \cdots & 1 & 1 & \cdots~~~~  1 \\
1 & 1 & & xI_{2n-2}-A & & & \mathcal{O} & & & \mathcal{O} \\
\vdots & \vdots \\
1 & 1 &  & &  &  &  & \\
1 & 1 &  & \mathcal{O}' & & \ & xI{n}-B & & & \mathcal{O}''  \\
\vdots & \vdots \\
1 & 1 & & & & & & & \\
1 & 1 & & & & & & & \\
\vdots & \vdots \\
1 & 1 &  & \mathcal{O}' & & \ & \mathcal{O}''  & & & xI{n}-B \\
\end{vmatrix}
.\]
After applying the following row operations consecutively
	\begin{itemize}
		\item $R_1 \rightarrow (x -1)R_1 - R_2  - \cdots - R_{4n}.$
		\item $R_2 \rightarrow (x -2)R_2 - R_3 - \cdots - R_{4n}.$	
	\end{itemize}
we obtain, 
\[
\Phi(L(CSEP(Q_{4n})), x) = x(x - 4n)^2 \frac{1}{(x - 2)}
\det \begin{pmatrix}
(xI_{2n-2} - A) & \mathcal{O} &  \mathcal{O} \\
\mathcal{O'} & (xI_{n} - B) & \mathcal{O''}\\
\mathcal{O'} & \mathcal{O''} & (xI_{n} - B) \\
\end{pmatrix}
\]
\[= \frac{x{(x - 4n)}^2}{(x - 2)} \cdot {|xI_{2n-2} - A|} \cdot {|xI_{n} - B|}^2\
.\]
By a similar approach used in the proof of Theorem \ref{CSEP-odd-D_2n}, we obtain  the spectrum of the graph as proposed.
\end{proof}

\begin{corollary}\label{CSEP-Q4n-even}
If $n$ is even, then the Laplacian spectrum for the $CSEP(Q_{4n})$ is given by
    \[\displaystyle \begin{pmatrix}
0 & 2 & n+2 & 2n & 4n\\
 1 & 2 & 2n-2 & 2n-3 & 2\\
\end{pmatrix}.\]
\end{corollary}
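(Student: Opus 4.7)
The plan is to derive this corollary as a direct corollary of Theorem \ref{CSEP-even-Q_4n}, which already supplies the full factorization of the Laplacian characteristic polynomial. Since the Laplacian matrix is real symmetric, its eigenvalues are precisely the roots of $\Phi(L(CSEP(Q_{4n})), x)$, counted with algebraic multiplicity equal to their geometric multiplicity. So the task reduces to reading off the roots of the explicit factorization.

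First, I would invoke Theorem \ref{CSEP-even-Q_4n} to write
\[
\Phi(L(CSEP(Q_{4n})), x) = x(x-2)^2(x-(n+2))^{2n-2}(x-4n)^2(x-2n)^{2n-3},
\]
and then simply list the five distinct roots $0,\ 2,\ n+2,\ 2n,\ 4n$ together with their multiplicities $1,\ 2,\ 2n-2,\ 2,\ 2n-3$, which yields the displayed spectral table.

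As a consistency check I would verify that the multiplicities sum to the order of $Q_{4n}$, namely
\[
1 + 2 + (2n-2) + 2 + (2n-3) = 4n,
\]
so all $4n$ eigenvalues of the Laplacian have been accounted for. I would also remark that $0$ appears with multiplicity $1$, which matches the fact that $CSEP(Q_{4n})$ is connected (the identity element is a universal vertex, as already used in the structural theorem).

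There is no genuine obstacle at this stage; the only delicate point would be if one worried about collisions between the listed values. But for $n\geq 2$ even one has $2 < n+2 \leq 2n < 4n$ and $n+2 \neq 2n$ precisely when $n \neq 2$, while at $n=2$ the values $n+2=4$ and $2n=4$ coincide and the multiplicity merges to $(2n-2)+(2n-3)=2n-5$ \emph{eigenvalues of value $4$} on top of the other entries, a boundary case that could be flagged as a brief remark without altering the stated spectrum in general. In particular, the graph is shown to be $L$-integral, which is the paper's recurring conclusion.
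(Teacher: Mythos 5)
Your proof is correct and matches the paper's approach: the corollary is simply the list of roots and multiplicities of the characteristic polynomial from Theorem \ref{CSEP-even-Q_4n}, and your multiplicity count $1+2+(2n-2)+2+(2n-3)=4n$ confirms it. One tiny slip in your side remark on the $n=2$ case: the merged multiplicity of the eigenvalue $4$ is $(2n-2)+(2n-3)=4n-5=3$, not $2n-5$, but this does not affect the statement.
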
 
By {\cite[Corollary 4.2]{mohar1991spectrum}}, we have the following corollary.

\begin{corollary}
If $n$ is even, then the number of spanning trees of $CSEP(Q_{4n})$ are $2^{2n+1} n^{2n-2} (n+2)^{2n-2}.$
\end{corollary}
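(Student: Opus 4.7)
The statement follows directly from applying the Matrix-Tree Theorem (in the form of \cite[Corollary 4.2]{mohar1991spectrum}) to the Laplacian spectrum already computed in Corollary \ref{CSEP-Q4n-even}. That corollary gives, for even $n$, that the nonzero Laplacian eigenvalues of $CSEP(Q_{4n})$ are $2$ (multiplicity $2$), $n+2$ (multiplicity $2n-2$), $2n$ (multiplicity $2n-3$), and $4n$ (multiplicity $2$), on a graph with $4n$ vertices.

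The plan is therefore a one-line computation. I would write
\[
\tau(CSEP(Q_{4n})) = \frac{1}{4n}\,\prod_i \lambda_i^{m_i} = \frac{1}{4n}\cdot 2^{2}\cdot (n+2)^{2n-2}\cdot (2n)^{2n-3}\cdot (4n)^{2},
\]
and then simplify. I would split $(2n)^{2n-3} = 2^{2n-3}n^{2n-3}$ and $(4n)^{2} = 16\,n^{2}$, collect the powers of $2$ as $2^{2}\cdot 2^{2n-3}\cdot 2^{4} = 2^{2n+3}$, collect the powers of $n$ as $n^{2n-3}\cdot n^{2}=n^{2n-1}$, and then absorb the $1/(4n) = 1/(2^{2} n)$ factor to obtain $2^{2n+1}\,n^{2n-2}\,(n+2)^{2n-2}$, which is precisely the claimed count.

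As a sanity check I would verify that the multiplicities add to $4n$, namely $1 + 2 + (2n-2) + (2n-3) + 2 = 4n$, so that every nonzero eigenvalue has been accounted for and no stray factor is missing. There is no genuine obstacle here; the only thing that could go wrong is bookkeeping on the powers of $2$ and $n$, so I would keep the two contributions separate until the very last step and combine them in a single line. Since the arithmetic is routine once the spectrum is in hand, the proof can be rendered essentially as a one-sentence invocation of \cite[Corollary 4.2]{mohar1991spectrum} followed by the displayed simplification above.
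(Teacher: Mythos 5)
Your computation is correct and is exactly the paper's argument: the paper obtains this corollary by applying the matrix--tree formula of \cite[Corollary 4.2]{mohar1991spectrum} to the spectrum in Corollary \ref{CSEP-Q4n-even}, which is precisely your one-line product $\frac{1}{4n}\cdot 2^{2}(n+2)^{2n-2}(2n)^{2n-3}(4n)^{2}=2^{2n+1}n^{2n-2}(n+2)^{2n-2}$. Your bookkeeping of the powers of $2$ and $n$ and the multiplicity check $1+2+(2n-2)+(2n-3)+2=4n$ are both accurate.
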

\begin{theorem}
For the semidihedral group $SD_{8n}$, the structure of conjugacy superenhanced power graph of $SD_{8n}$ is given by

\[
CSEP(SD_{8n})= 
\begin{dcases}
     K_1\lor (K_1\lor(K_{2n-1}\cup K_1)\cup K_1)[K_1,K_1,K_2,K_2,\ldots, K_2,K_{2n},K_{2n}],~ \text{if $n$ is even} \\  K_1\lor (K_1\lor(K_{2n-1}\cup K_1)\cup K_1\cup K_1 )[K_1,K_1,K_2,K_2,\ldots, K_2,K_{2n},K_{n},K_{n}]
   , \text{ if $n$ is odd}.   \\ 
\end{dcases}
\]
\end{theorem}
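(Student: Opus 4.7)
My plan is to mirror the proof template used in the dihedral and generalized quaternion cases. First, I would work out the conjugacy classes of $SD_{8n}$ from the defining relation $ba=a^{2n-1}b$. A short computation shows that conjugation by $a^{l}b$ sends $a^{k}$ to $a^{k(2n-1)}$, which modulo $4n$ equals $a^{-k}$ when $k$ is even and $a^{2n-k}$ when $k$ is odd. The fixed points under these two maps recover the centers $\{e,a^{2n}\}$ for $n$ even and $\{e,a^{2n},a^{n},a^{3n}\}$ for $n$ odd, while every other cyclic element sits in a class of size two. For the reflections $a^{k}b$, conjugation by $a^{l}$ sends $a^{k}b$ to $a^{k+2l(1-n)}b$; a gcd calculation yields $\gcd(2(1-n),4n)=2$ for $n$ even, so each of the even and odd reflections forms a single class of $2n$ elements, and $\gcd(2(1-n),4n)=4$ for $n$ odd, so each family splits into two classes of $n$ elements.

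Next I would read off $\mathcal{P}_E(SD_{8n})$ directly from the maximal cyclic subgroup decomposition \eqref{SD_8n}. The cyclic subgroup $\langle a\rangle$ yields a clique on $4n$ vertices. Each even reflection $a^{2i}b$ lies only in the order-two subgroup $P_{i}$, so it is $\mathcal{P}_E$-adjacent only to $e$. Each odd reflection $a^{2j+1}b$ lies in the order-four subgroup $Q_{j}$, so it is $\mathcal{P}_E$-adjacent exactly to $\{e,a^{2n},a^{2n+2j+1}b\}$. In particular $e$ is a dominating vertex, $a^{2n}$ is adjacent to every cyclic element and to every odd reflection but to no even reflection, and (for $n$ odd) $a^{n}$ and $a^{3n}$ lie in $\langle a\rangle$ only---a direct check using \eqref{SD_8n} shows that $a^{n},a^{3n}\notin P_{i}\cup Q_{j}$ for any $i,j$, so they are $\mathcal{P}_E$-adjacent only to the other elements of $\langle a\rangle$.

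Combining the two pieces, I would then assemble $CSEP(SD_{8n})$ as in the earlier proofs. For $n$ even: the two singletons give the two $K_{1}$ blocks, the $2n-1$ non-central cyclic pairs give $K_{2}$ blocks arranged as the clique $K_{2n-1}$ around $\{a^{2n}\}$, the even-reflection class is a $K_{2n}$ block attached only to $\{e\}$, and the odd-reflection class is a $K_{2n}$ block attached additionally to $\{a^{2n}\}$ via $Q_{j}$. For $n$ odd: the extra central singletons $\{a^{n}\}$ and $\{a^{3n}\}$ share the external adjacency pattern of the non-central cyclic classes and are mutually adjacent through $\langle a\rangle$, so they collapse into one additional $K_{2}$ block, bringing the count to $K_{2n-1}$; the two odd-reflection classes merge into a single $K_{2n}$ block because $a^{2j+1}b\sim a^{2n+2j+1}b$ in $\mathcal{P}_E$ gives a $CSEP$-edge between the two classes; but the two even-reflection classes do \emph{not} merge, since no two distinct even reflections lie in a common cyclic subgroup and so there is no $\mathcal{P}_E$-edge between the classes---they remain two disjoint $K_{n}$ blocks joined only to $\{e\}$.

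The principal obstacle will be the $n$-odd case, where one has to justify simultaneously the asymmetry between even and odd reflections---the odd classes fuse into one $K_{2n}$ block purely because of the internal $\mathcal{P}_E$-edges inside each $Q_{j}$, whereas the even classes cannot fuse because the $P_{i}$'s have order two and provide no analogous bridge---together with the clean absorption of $\{a^{n}\}$ and $\{a^{3n}\}$ into the non-central cyclic clique. All other adjacencies follow mechanically from \eqref{SD_8n}.
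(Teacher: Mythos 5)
Your proposal is correct and follows essentially the same route as the paper: compute the conjugacy classes of $SD_{8n}$, read off adjacency in $\mathcal{P}_E(SD_{8n})$ from the maximal cyclic subgroups $\langle a\rangle$, $P_i$, $Q_j$, and assemble the generalized composition, with the parity of $n$ deciding whether the two odd-reflection classes fuse and whether $a^{n},a^{3n}$ contribute an extra $K_2$ block. Like the paper, you implicitly invoke the standing convention that each conjugacy class induces a clique in the super graph (this is what makes the even-reflection blocks $K_{2n}$, respectively $K_n$, complete, since those classes carry no internal $\mathcal{P}_E$-edges).
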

\begin{proof}For the semidihedral group $SD_{8n}$, we have
\[\ 
 Z(SD_{8n}) = \begin{cases}
    \{e,a^{2n}\},& \text{if $n$ is even} \\
    \{e,a^{2n},a^n,a^{3n}\},& \text{if $n$ is odd}.
    \end{cases}
\]

Now, the conjugacy classes of semidihedral group $SD_{8n}$~ are~ given~ as~ follows:
\\ \noindent \textbf{Case 1:} \textit{If $n$ is odd.}

\begin{center}
$(e)^{SD_{8n}}=\{e\}$, $(a^{n})^{SD_{8n}}=\{a^{n}\}$, $(a^{2n})^{SD_{8n}}=\{a^{2n}\}$, $(a^{3n})^{SD_{8n}}=\{a^{3n}\}$,
\[ (a^{i})^{SD_{8n}}=
\begin{dcases}
\{a^i,a^{4n-i}\},~ i~ $\text{is even}$\\ 
\{a^i,a^{2n-i}\},~ i~ $\text{is odd}$, 
\end{dcases}
\]
$ C_{j}=\{a^{4k+j-1}b : 0\leq k\leq n-1 \},$ for $ 1 \leq j\leq 4.$ 
\end{center}

Note that, $\{a^{2n}\}$ is adjacent to $\langle a \rangle \cup 
  \{{a^{2i+1}b}: 1 \leq i \leq 2n-1\}$. For the conjugacy classes $\{a^{i}, \, a^{2n-i}\}, \text{where} ~ 1\leq i\leq 2n-1 $ and $\{a^{i},a^{4n-i}\}, \text{where} ~ 2 \leq i\leq 2n-2 $ for $i$ odd or even respectively, each element of these conjugacy classes are adjacent to every element of other conjugacy classes, this forms a complete graph of order ${4n-2}$.
Now $ab \in C_{2} $ and $ab$ is adjacent to $a^{2n+1}b$ in $P_{E}(SD_{8n}).$ Observe that $a^{2n+1}b \in C_{4}.$ It implies that each element of $C_{2}$ is adjacent to every element of $C_{4}.$

Now, we obtain the neighbours of each element of the conjugacy superenhanced power graph
\begin{enumerate} [\rm (i)]
\item $N[e]=SD_{8n}.$
\item $N[a^{2n}]= \langle a \rangle \cup 
  \{{a^{2i+1}b}: 0 \leq i \leq 2n-1\}.$
  \item $N[a^i]= \langle a \rangle ,~ \text{where}~ 1 \leq i \leq 4n-1, i \neq 2n.$
\item $N[a^{2i-1}b]=\{e,a^{2n},ab,a^{3}b, \ldots ,a^{4n-1}b\}, \text{ where}~ 1 \leq i \leq 2n.$
\item $N[a^{4i}b]=\{e,a^{4}b,a^{8}b,\ldots, a^{4n-4}b, a^{4n}b=b \}, \text{ where } 1 \leq i \leq n. $
\item $N[a^{4i-2}b]=\{e,a^{2}b,a^{6}b,\ldots, a^{4n-2}b\}, \text{ where } 1 \leq i \leq n. $
\end{enumerate}
\noindent  \textbf{Case 2:} \textit{If $n$ is even.} 
    \begin{center}
        $(e)^{SD_{8n}}=\{e\},(a^{2n})^{SD_{8n}}=\{a^{2n}\}\vspace{-0.5mm} $,       
\[ (a^{i})^{SD_{8n}}=
\begin{dcases}
\{a^i,a^{4n-i}\},~ i~ $\text{is even}$\\ 
\{a^i,a^{2n-i}\},~ i~ $\text{is odd},$ 
\end{dcases}
\]
$ D_1=\{a^{2k+1}b :  0\leq k\leq n-1 \},$\ $ D_2=\{a^{2k}b :  0\leq k\leq 2n-1 \}.$\

\end{center}

Now, for the conjugacy classes $\{a^{i}, \, a^{2n-i}\}, \text{where} ~ 1\leq i\leq 2n-1 $ and $\{a^{i},a^{4n-i}\}, \text{where} ~ 2 \leq i\leq 2n-2 $ for $i$ odd or even, respectively. Each element of these conjugacy classes are adjacent to every element of other conjugacy classes. It implies that this forms a complete graph of order ${4n-2}.$ In case of $n$ being even, $2n+i$ remains even or odd according to $i$ being even or odd, respectively.

Now, we obtain the neighbourhood of each element of the conjugacy superenhanced power graph:

\begin{enumerate} [\rm (i)]
\item $N[e]=SD_{8n}.$
\item $N[a^{2n}]= \langle a \rangle \cup 
  \{{a^{2i+1}b}: 0 \leq i \leq 2n-1\}.$
  \item $N[a^i]= \langle a \rangle ,~ \text{where}~ 1 \leq i \leq 4n-1, i \neq 2n.$
\item $N[a^{2i-1}b]=\{e,a^{2n},ab,a^{3}b, \ldots ,a^{4n-1}b\}, \text{ where}~ 1 \leq i \leq 2n.$
\item $N[a^{2i}b]=\{e,a^{2}b,a^{4}b,\ldots, a^{4n-2}b, a^{4n}b=b\}, \text{ where } 1 \leq i \leq 2n. $
\end{enumerate}

Looking at the adjacency of the elements in the enhanced power graph, we get the above mentioned graphs.
\end{proof}

\begin{theorem}\label{CSEP-SD8n}  
If $n$ is even, then the characteristic polynomial of the Laplacian matrix of ${CSEP}(SD_{8n})$ is given by
\[\Phi(L({CSEP}(SD_{8n})), x) = x(x-1)(x-2)(x-(2n+1))^{2n-1}(x-(2n+2))^{2n-2}(x -4n)^{4n-3}(x-6n)(x-8n).\] 
\end{theorem}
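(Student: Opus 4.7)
The plan is to set up the Laplacian matrix $L = L(CSEP(SD_{8n}))$ in block form and compute $\Phi(L,x) = \det(xI-L)$ by adapting the row-reduction technique used in the proofs of Theorems \ref{CSEP-odd-D_2n} and \ref{CSEP-even-Q_4n}. Because two vertices ($e$ and $a^{2n}$) play a distinguished role, the reduction is carried out in two stages.

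First, I would order the $8n$ vertices as $e$, $a^{2n}$, the remaining $4n-2$ powers $\{a^i : 1 \le i \le 4n-1,\ i\ne 2n\}$, the odd-indexed elements $\{a^{2k+1}b : 0 \le k \le 2n-1\}$, and finally the even-indexed elements $\{a^{2j}b : 0 \le j \le 2n-1\}$. Using the neighbourhoods derived in the preceding theorem, $e$ is universal (degree $8n-1$); $a^{2n}$ has degree $6n-1$, being adjacent to every other power of $a$ and to every odd-indexed $a^{2k+1}b$, but to no even-indexed $a^{2j}b$; each $a^i$ with $i\ne 0,2n$ has degree $4n-1$; each $a^{2k+1}b$ has degree $2n+1$; and each $a^{2j}b$ has degree $2n$. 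The three principal diagonal blocks for the power part, the odd-$b$ part, and the even-$b$ part take the forms $A = 4nI_{4n-2} - J_{4n-2}$, $B_1 = (2n+2)I_{2n} - J_{2n}$, and $B_2 = (2n+1)I_{2n} - J_{2n}$, respectively, while the off-block entries between the row/column of $a^{2n}$ and the even-$b$ block vanish.

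Since each column of $xI-L$ sums to $x$, the row operation $R_1 \to (x-1)R_1 - (R_2+\cdots+R_{8n})$ converts the first row to $(x(x-8n), 0, \ldots, 0)$ and multiplies the determinant by $x-1$, so $\det(xI-L) = \frac{x(x-8n)}{x-1}\det(M')$, where $M'$ is the $(8n-1)\times(8n-1)$ principal submatrix obtained by deleting the row and column indexed by $e$. The crucial structural observation is that since $a^{2n}$ is not adjacent to any even-indexed element, the even-$b$ block becomes disconnected in $M'$, so $M' = \mathrm{diag}(M'_1,\, xI-B_2)$, where $M'_1$ of size $(6n-1)\times(6n-1)$ carries the rows/columns of $a^{2n}$, the $a^i$'s, and the odd-$b$'s. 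The column sums of $M'_1$ are uniformly $x-1$, so a second reduction $R_1 \to (x-2)R_1 - (R_2+\cdots+R_{6n-1})$ converts its first row to $((x-1)(x-6n), 0, \ldots, 0)$ (multiplying the determinant by $x-2$), leaving after expansion a block-diagonal matrix with blocks $xI-A$ and $xI-B_1$.

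Combining the steps yields
\[
\det(xI-L) = \frac{x(x-6n)(x-8n)}{x-2}\,\det(xI-A)\,\det(xI-B_1)\,\det(xI-B_2),
\]
and the three block determinants follow from the standard identity $\det(xI_m - (\alpha I_m - J_m)) = (x-\alpha+m)(x-\alpha)^{m-1}$: they evaluate to $(x-2)(x-4n)^{4n-3}$, $(x-2)(x-(2n+2))^{2n-1}$, and $(x-1)(x-(2n+1))^{2n-1}$. Multiplying through gives the claimed polynomial. The main obstacle, which is a new feature compared with the dihedral and quaternion cases handled earlier, is recognizing that the asymmetry ``$a^{2n}$ is adjacent to the odd-indexed $b$-elements but not to the even-indexed ones'' forces the block decomposition of $M'$ after the first reduction; once that is in place, the rest is an essentially routine extension of the procedure used in Theorem \ref{CSEP-even-Q_4n}.
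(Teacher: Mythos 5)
Your proof is correct and follows essentially the same route as the paper: the same vertex ordering ($e$, $a^{2n}$, the remaining powers, the odd-indexed $a^ib$'s, the even-indexed $a^ib$'s), the same two-stage row reduction producing the factor $\frac{x(x-6n)(x-8n)}{x-2}$, and the same evaluation of $|xI-A|$, $|xI-B_1|$, $|xI-B_2|$ via eigenvalues of $\alpha I_m - J_m$. One remark: your product yields the exponent $2n-1$ on $(x-(2n+2))$, not the $2n-2$ printed in the statement; this is a typographical error in the statement (as printed the polynomial has degree $8n-1$, impossible for an $8n\times 8n$ matrix), and both the paper's own derivation and the subsequent corollary on the spectrum (multiplicity $2n-1$ for the eigenvalue $2n+2$) agree with your computation.
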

\begin{proof}
The Laplacian matrix $L({CSEP}(SD_{8n}))$ is an $8n \times 8n$ matrix defined as follows, in which rows and columns are arranged in order by the vertices $e = a^{4n}=b^{2},a^{2n}, a, a^2, \ldots, a^{2n-1},a^{2n+1},\ldots, a^{4n-1}$ and then $ab, a^3b, a^5b,  \ldots, a^{4n-1}b$ and finally, $b, a^2b, a^4b,\ldots, a^{4n-2}b$.


\[L(CSEP(SD_{8n}))  = \displaystyle \begin{pmatrix}
  8n-1 & -1 & -1 & \cdots  & -1 & -1 & \cdots &-1& -1\cdots -1 \\ 
  -1 & 6n - 1 & -1 & \cdots & -1 & -1 & \cdots&-1 & ~~0~ \cdots ~~~~~0 \\
 	-1  & -1 &  & A &   & & \mathcal O &    & \mathcal O& \\
 	\; \; \vdots & \; \;\vdots&    &  &  &  &  & \\ 
  -1 & -1 & & & & & & \\
 	-1  & -1 & &\mathcal O' & & & B & &\mathcal O"&     \\ 

 	\; \; \vdots& \; \; \vdots& & &  &  &  &  \\ 
   -1 & -1 & & & & & & \\
 	-1~  &~~~~~~~  0 & &\mathcal O' &  & &\mathcal O" &  & B' &   \\
  \;\; \vdots & \; \; \vdots & & & & & & \\
  -1~ &~~~~~~~ 0 & & & & & & & \\
 	\end{pmatrix},
  \]
  where
	\[A =\displaystyle \begin{pmatrix}
	4n-1 & -1 & \cdots & -1\\
        -1 & 4n-1 & \cdots & -1\\
        \vdots & \vdots & \ddots & \vdots \\
        -1 & -1 & \cdots & 4n-1\\
	\end{pmatrix},\] 
	 
        \[ B =\displaystyle \begin{pmatrix}
		2n+1 & -1 & \cdots & -1\\
        -1 & 2n+1 & \cdots & -1\\
        \vdots & \vdots & \ddots & \vdots\\
        -1 & -1 & \cdots & 2n+1\\
	\end{pmatrix}
 \]
and
 \[ B' =\displaystyle \begin{pmatrix}
		2n & -1 & \cdots & -1\\
        -1 & 2n & \cdots & -1\\
        \vdots & \vdots & \ddots & \vdots\\
        -1 & -1 & \cdots & 2n\\
	\end{pmatrix},
 \]
  such that the orders of $A$, $B$ and $B'$ are $4n-2\times 4n-2$, $2n\times 2n$ and $2n\times 2n$, respectively. It follows that $A= 4nI_{4n-2}-J_{4n-2}$, $B= (2n+2)I_{2n}-J_{2n}$ and $B'= (2n+1)I_{2n}-J_{2n}.$

Then the characteristic polynomial of $L({CSCom}(SD_{8n}))$ is
\[\Phi(L(CSEP(SD_{8n})), x)  = \displaystyle \begin{vmatrix}
  x-(8n-1) & 1 & 1 & \cdots  & 1 & 1 & \cdots & 1 & 1 ~~~~ \cdots~~~  1 \\ 
  1 & x-(6n - 1) & 1 & \cdots & 1 & 1 & \cdots& 1 & ~~0~ \cdots ~~~~~0 \\
 	1  & 1 &  & xI_{4n-2}-A &   & & \mathcal O &    & \mathcal O& \\
 	\; \; \vdots & \; \;\vdots&    &  &  &  &  & \\ 
  1 & 1 & & & & & & \\
 	1  & 1 & &\mathcal O' & & & xI_{2n}-B & &\mathcal O"&     \\ 

 	\; \; \vdots& \; \; \vdots& & &  &  &  &  \\ 
   1 & 1 & & & & & & \\
 	1~  &~~~~~~~  0 & &\mathcal O' &  & &\mathcal O" &  & xI_{2n}-B' &   \\
  \;\; \vdots & \; \; \vdots & & & & & & \\
  1~ &~~~~~~~ 0 & & & & & & & \\
 	\end{vmatrix}.
  \]
  
After applying the following row operations consecutively
	\begin{itemize}
		\item $R_1 \rightarrow (x -1)R_1 - R_2  - \cdots - R_{8n}.$
		\item $R_2 \rightarrow (x -2)R_2 - R_3 - \cdots - R_{6n}.$
	\end{itemize}
 we obtain, 
	
	\[\Phi(L(CSEP(SD_{8n})), x) = \frac{x(x -8n)(x-6n)}{(x-2)}  \displaystyle \begin{vmatrix}
	xI_{4n-2} -A & \mathcal O & \mathcal O\\
	\mathcal O' & xI_{2n} - B & \mathcal{O"}\\
 \mathcal{O'} & \mathcal{O"}  & xI_{2n} - B'
	\end{vmatrix} \]
 \vspace{3mm}
 \[
 = \frac{x(x -8n)(x-6n)}{(x-2)} |xI_{4n-2}- A|\cdot|xI_{2n} - B|\cdot|xI_{2n} - B'|.
 \]
Using a similar approach to the proof of Theorem \ref{CSEP-odd-D_2n}, we obtain  the spectrum of the graph as proposed.
\end{proof}
\begin{corollary}
If $n$ is even, then the Laplacian spectrum of $CSEP(SD_{8n})$ is given by 
\[\displaystyle \begin{pmatrix}
0 & 1& 2 & 2n+1 & 2n+2 & 4n & 6n & 8n\\
 1 & 1&1 & 2n-1 & 2n-1 & 4n-3 &1 &1\\
\end{pmatrix}.\]
\end{corollary}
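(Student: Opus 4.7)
The plan is to read off the Laplacian spectrum directly from the characteristic polynomial produced in Theorem \ref{CSEP-SD8n}. Because $L(CSEP(SD_{8n}))$ is a real symmetric matrix, its eigenvalues, counted with multiplicity, are precisely the roots of $\Phi(L(CSEP(SD_{8n})), x)$ counted with algebraic multiplicity. So no new linear algebra is needed; the corollary is essentially a bookkeeping restatement of the previous theorem.

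Concretely, I would take the fully factored form of $\Phi(L(CSEP(SD_{8n})), x)$ given in Theorem \ref{CSEP-SD8n} and list, in increasing order, each distinct linear factor $(x-\lambda)$ together with its exponent $m$. Each such factor contributes the Laplacian eigenvalue $\lambda$ with multiplicity $m$. Placing the $\lambda$'s in the first row and the corresponding $m$'s beneath them yields exactly the spectrum matrix displayed in the statement, namely the eigenvalues $0,\,1,\,2,\,2n+1,\,2n+2,\,4n,\,6n,\,8n$ with the multiplicities $1,\,1,\,1,\,2n-1,\,2n-1,\,4n-3,\,1,\,1$.

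As a sanity check, I would verify two things. First, the multiplicities should sum to $|SD_{8n}| = 8n$, which is the size of the Laplacian matrix; this ensures that no eigenvalues are missing from the tabulation. Second, the eigenvalue $0$ should appear with multiplicity $1$, which confirms that $CSEP(SD_{8n})$ is connected. The latter is already guaranteed by the graph-theoretic description established earlier in the section, since the identity $e$ is a dominating vertex (as recorded in the neighbourhood computation $N[e] = SD_{8n}$) and a graph with a dominating vertex is automatically connected. No step of the argument presents any obstacle, as the entire substantive content is the characteristic polynomial computation carried out in Theorem \ref{CSEP-SD8n}.
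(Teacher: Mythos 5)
Your approach is the same as the paper's: the corollary is just a tabulation of the roots and multiplicities of the characteristic polynomial from Theorem \ref{CSEP-SD8n}, and no further argument is needed. However, there is one point you gloss over: the polynomial as printed in Theorem \ref{CSEP-SD8n} carries the factor $(x-(2n+2))^{2n-2}$, so a literal reading-off would give the eigenvalue $2n+2$ with multiplicity $2n-2$, not the $2n-1$ you (and the corollary) claim, and the multiplicities would then sum to $8n-1$ rather than $8n$. The degree check you propose as a ``sanity check'' is therefore not a formality here --- it actually fails against the printed statement and exposes a typo in the theorem. The resolution lies in the theorem's proof: with $B=(2n+2)I_{2n}-J_{2n}$ one has $|xI_{2n}-B|=(x-2)\,(x-(2n+2))^{2n-1}$, since $J_{2n}$ has eigenvalue $2n$ once and $0$ with multiplicity $2n-1$; combined with $|xI_{4n-2}-A|=(x-2)(x-4n)^{4n-3}$, $|xI_{2n}-B'|=(x-1)(x-(2n+1))^{2n-1}$ and the prefactor $x(x-8n)(x-6n)/(x-2)$, the correct polynomial is $x(x-1)(x-2)(x-(2n+1))^{2n-1}(x-(2n+2))^{2n-1}(x-4n)^{4n-3}(x-6n)(x-8n)$, of degree $8n$, which matches the corollary. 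So your argument is essentially right, but to be complete you should either cite this corrected factorization or note explicitly that the exponent $2n-2$ in the theorem statement is a misprint for $2n-1$; as written, your claim that the printed factorization ``yields exactly'' the displayed spectrum is not literally true.
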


By {\cite[Corollary 4.2]{mohar1991spectrum}}, we have the following corollary.

\begin{corollary}
If $n$ is even, then the number of spanning trees of $CSEP(SD_{8n})$ are $ 3.2^{8n-4} n^{4n-2} (2n+2)^{2n-1} \\(2n+1)^{2n-1}.$
\end{corollary}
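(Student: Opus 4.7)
The plan is to apply Mohar's formula (the cited Corollary 4.2 of \cite{mohar1991spectrum}), which states that the number of spanning trees of a connected graph $\Gamma$ on $N$ vertices equals $\tau(\Gamma) = \frac{1}{N}\prod_{i=1}^{N-1}\mu_i$, where $\mu_1, \ldots, \mu_{N-1}$ are the non-zero Laplacian eigenvalues of $\Gamma$. Since the preceding corollary supplies the full Laplacian spectrum of $CSEP(SD_{8n})$ for even $n$, the result reduces to a direct computation.

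I would first check that the multiplicities sum to $N = 8n$, namely $1 + 1 + 1 + (2n-1) + (2n-1) + (4n-3) + 1 + 1 = 8n$, and that the graph is connected so that Mohar's formula is applicable (this is immediate since the identity is a dominating vertex of $CSEP(SD_{8n})$). Then, substituting the non-zero eigenvalues $\{1, 2, (2n+1)^{2n-1}, (2n+2)^{2n-1}, (4n)^{4n-3}, 6n, 8n\}$ into the product, I obtain
\[
\tau(CSEP(SD_{8n})) = \frac{1}{8n}\cdot 1 \cdot 2 \cdot (2n+1)^{2n-1}(2n+2)^{2n-1}(4n)^{4n-3}\cdot 6n \cdot 8n.
\]

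The remaining step is pure bookkeeping: cancel the factor $8n$ in the denominator against one of the $8n$ factors in the numerator, collect the prefactor $2 \cdot 6n = 12n$, and expand $(4n)^{4n-3} = 2^{8n-6} n^{4n-3}$. Combining the powers of $2$ gives $12 \cdot 2^{8n-6} = 3 \cdot 2^{8n-4}$, and combining the powers of $n$ gives $n \cdot n^{4n-3} = n^{4n-2}$. This yields
\[
\tau(CSEP(SD_{8n})) = 3 \cdot 2^{8n-4}\, n^{4n-2}\,(2n+2)^{2n-1}(2n+1)^{2n-1},
\]
matching the claimed formula. There is no substantive obstacle here; the only mildly error-prone step is the arithmetic of combining the factor $2 \cdot 6n \cdot 8n / (8n)$ with the power $(4n)^{4n-3}$ so that the final constant $3 \cdot 2^{8n-4}$ emerges correctly.
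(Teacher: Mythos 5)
Your proposal is correct and follows exactly the route the paper intends: the corollary is obtained by applying Mohar's formula $\tau(\Gamma)=\frac{1}{N}\prod_{i=1}^{N-1}\mu_i$ to the Laplacian spectrum of $CSEP(SD_{8n})$ given just before it, and your arithmetic $\frac{1}{8n}\cdot 1\cdot 2\cdot(2n+1)^{2n-1}(2n+2)^{2n-1}(4n)^{4n-3}\cdot 6n\cdot 8n = 3\cdot 2^{8n-4} n^{4n-2}(2n+2)^{2n-1}(2n+1)^{2n-1}$ checks out. Your multiplicity check also confirms you used the (correct) spectrum from the corollary rather than the theorem's stated polynomial, which contains a typographical exponent $(x-(2n+2))^{2n-2}$ that should be $2n-1$.
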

\begin{theorem}
If $n$ is odd, then the characteristic polynomial of the Laplacian matrix of \(CSEP(SD_{8n})\) is given by
\[
\Phi(L(CSEP(SD_{8n})), x) = x(x - 8n)(x - 6n)(x - 2)(x - 4n)^{4n-3}(x - 1)^{2}(x-(n+1))^{2n-2}(x-(2n+2))^{2n-1}.
\]
\end{theorem}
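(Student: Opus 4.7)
The plan is to follow the block-matrix/row-reduction template of Theorem~\ref{CSEP-SD8n} (the even-$n$ $SD_{8n}$ case), adapting it to accommodate the richer conjugacy structure when $n$ is odd. The genuinely new feature, which I expect to be the main obstacle, is a non-trivial off-diagonal block that appears because the classes $C_2$ and $C_4$ are completely joined in $CSEP(SD_{8n})$.

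First I will fix a vertex ordering: $e$, $a^{2n}$, the remaining $4n-2$ elements of $\langle a\rangle$ (these absorb the central singletons $\{a^{n}\}$ and $\{a^{3n}\}$ together with the $2n-2$ non-central conjugacy pairs), then $C_1$, $C_3$, $C_2$, $C_4$ in that order. Since each conjugacy class is a clique in $CSEP(SD_{8n})$ with external adjacencies laid out in the preceding structure theorem, $L(CSEP(SD_{8n}))$ takes a $7\times 7$ block form whose diagonal blocks are $8n-1$, $6n-1$, $A = 4nI_{4n-2} - J_{4n-2}$, two copies of $B_1 = (n+1)I_n - J_n$, and two copies of $B_2 = (2n+2)I_n - J_n$. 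All the off-diagonal blocks are $\pm\mathbf{1}$-vectors or $\mathbf{0}$, except for a $-J_n$ block between the $C_2$ and $C_4$ slots; this is what is genuinely new compared with the even case, where the corresponding slot was $\mathbf{0}$.

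Next I will form $xI - L$ and apply the two row operations $R_1 \to (x-1)R_1 - (R_2 + \cdots + R_{8n})$ and $R_2 \to (x-2)R_2 - (R_3 + \cdots + R_{6n})$, exactly as in Theorems~\ref{CSEP-odd-D_2n}, \ref{CSEP-even-D_2n}, and \ref{CSEP-SD8n}. These operations will collapse the first two rows and produce the familiar prefactor $\frac{x(x-8n)(x-6n)}{x-2}$, reducing the problem to
\[
\Phi\bigl(L(CSEP(SD_{8n})), x\bigr) = \frac{x(x-8n)(x-6n)}{x-2}\cdot|xI-A|\cdot |xI-B_1|^{2}\cdot M(x),
\]
where $M(x) = \det\!\begin{pmatrix} xI - B_2 & J_n \\ J_n & xI - B_2\end{pmatrix}$ is the determinant of the $2n\times 2n$ coupled block coming from $C_2 \cup C_4$.

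The main new computation is $M(x)$. Since both $xI - B_2$ and $J_n$ are polynomials in $J_n$, they commute, so the identity $\det\!\begin{pmatrix} P & Q \\ Q & P\end{pmatrix} = \det(P+Q)\det(P-Q)$ for commuting blocks applies and gives
\[
M(x) = \det\bigl((x-2n-2)I_n + 2J_n\bigr)\cdot\det\bigl((x-2n-2)I_n\bigr) = (x-2)(x-(2n+2))^{2n-1},
\]
using that $\alpha I_n + \beta J_n$ has eigenvalues $\alpha + n\beta$ (once) and $\alpha$ (with multiplicity $n-1$). Substituting $|xI - A| = (x-2)(x-4n)^{4n-3}$ and $|xI - B_1| = (x-1)(x-(n+1))^{n-1}$ from the calculation in Theorem~\ref{CSEP-odd-D_2n}, and cancelling the stray $(x-2)$ produced by $M(x)$ against the denominator of the row-reduction prefactor, produces the claimed
\[
x(x-8n)(x-6n)(x-2)(x-4n)^{4n-3}(x-1)^{2}(x-(n+1))^{2n-2}(x-(2n+2))^{2n-1}.
\]
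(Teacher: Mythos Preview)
Your computation lands on the correct polynomial, but there is one slip in the execution and one unnecessary detour compared with the paper.

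The slip: with the ordering you specify ($e,\ a^{2n},\ \langle a\rangle\setminus\{e,a^{2n}\},\ C_1,\ C_3,\ C_2,\ C_4$), rows $R_3,\ldots,R_{6n}$ correspond to $\langle a\rangle\setminus\{e,a^{2n}\}$ together with $C_1\cup C_3$. But $a^{2n}$ is \emph{not} adjacent to $C_1\cup C_3$; its non-identity neighbours are $\langle a\rangle\setminus\{e,a^{2n}\}$ and $C_2\cup C_4$. Consequently $R_2\to(x-2)R_2-(R_3+\cdots+R_{6n})$ does not clear row $2$ as claimed (the $C_1,C_3$ columns pick up $-(x-1)$ and the $C_2,C_4$ columns retain $x-2$). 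The fix is trivial: list $C_2,C_4$ before $C_1,C_3$, exactly as Theorem~\ref{CSEP-SD8n} does for even $n$; then the operation works verbatim and the rest of your argument goes through unchanged.

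The detour: what you flag as the ``genuinely new'' $-J_n$ coupling is an artefact of separating $C_2$ from $C_4$. Since every vertex of $C_2$ is adjacent to every vertex of $C_4$, the union $C_2\cup C_4$ is a single $2n$-clique whose Laplacian block is $(2n+2)I_{2n}-J_{2n}$, i.e.\ exactly the block $B'$ already present in the even-$n$ proof. The paper exploits this: it nests the blocks as $e$ on the outside, then a $(6n-1)$-block $A$ consisting of $a^{2n}$ joined to $\langle a\rangle\setminus\{e,a^{2n}\}$ and to the single $2n$-block $B'$, and finally two independent $n$-blocks $C=(n+1)I_n-J_n$ for $C_1$ and $C_3$. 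One row reduction peels off $e$, a second (applied inside $|xI-A|$) peels off $a^{2n}$, and every remaining determinant is of the standard $\alpha I-J$ type; no commuting-block identity is needed. Your route via $\det\bigl(\begin{smallmatrix}P&Q\\Q&P\end{smallmatrix}\bigr)=\det(P+Q)\det(P-Q)$ correctly recovers the same factor $(x-2)(x-(2n+2))^{2n-1}$, so nothing is lost---it is simply extra machinery to undo a split that was never necessary.
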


\begin{proof}
   
The Laplacian matrix \( L(CSEP(SD_{8n})) \) is an \(8n \times 8n\) matrix defined as follows, where the rows and columns are arranged in order by the vertices $e = a^{4n}=b^{2},a^{2n}, a,a^2, \ldots, a^{2n-1},a^{2n+1},\ldots, a^{4n-1}$ and then $ab, a^3b,a^5b,  \ldots,a^{4n-1}b$ and, $b,a^4b,\ldots, a^{4n-4}b$ and finally $a^2b,a^6b,\ldots, a^{4n-2}b$.

 \[
L(CSCom(SD_{8n})) = \begin{pmatrix}
8n-1 & -1 & -1 & -1 & \cdots & -1 & -1 & \cdots & -1 \\
-1 & & & & A & & & \mathcal{O} & \\
-1 & & & & & & & & \\
-1 & & & & & & & & \\
\vdots &  \\
-1 & &  & &  &  &  & \\
-1 &  &  &  & \mathcal{O}' & & \ & B & \\
\vdots & \\
-1 & & & & & & & & \\
\end{pmatrix}
,\]

where

\[
A =
\begin{pmatrix}
6n-1 & -1 & -1 & -1 & \cdots & -1 & -1 & \cdots & -1 \\
-1 & & & & A' & & & \mathcal{O''} & \\
-1 & & & & & & & & \\
-1 & & & & & & & & \\
\vdots &  \\
-1 & &  & &  &  &  & \\
-1 &  &  &  & \mathcal{O'''} & \ & & B' & \\
\vdots & \\
-1 & & & & & & & & \\
\end{pmatrix}
\] and

\[
B =
\begin{pmatrix}
C & \mathcal{O}_{1} &\\
\mathcal{O}_{1} & C &\\
\end{pmatrix}
.\]

such that, $A' = 4nI_{4n-2} - J_{4n-2}$, $B' = (2n+2)I_{2n} - J_{2n}$ and $C = (n+1)I_{n} - J_{n}$

$\mathcal{O}_{1}, \mathcal{O}, \mathcal{O'}, \mathcal{O''}$ and $\mathcal{O}'''$ are the zero matrices of appropriate sizes.
Then the characteristic polynomial of  $L(CSEP(SD_{8n}))$ is given by

\[
\Phi(L(CSEP(SD_{8n})), x) =
 \begin{vmatrix}
x-(8n-1) & 1 & 1 & 1 & \cdots & 1 & 1 & \cdots & 1 \\
1 & & & & xI_{6n-1}-A & & & \mathcal{O} & \\
1 & & & & & & & & \\
1 & & & & & & & & \\
\vdots &  \\
1 & &  & &  &  &  & \\
1 &  &  &  & \mathcal{O}' & & \ & xI_{2n}-B & \\
\vdots & \\
1 & & & & & & & & \\
\end{vmatrix}
.\]

Apply the row operation \( R_1 \rightarrow (x - 1)R_1 - R_2 - \cdots - R_{8n} \) and then expand by using the first row, we obtain

\[
\Phi(L(CSEP(SD_{8n})), x) = x(x - 8n) \frac{1}{(x - 1)}
\det \begin{pmatrix}
(xI_{6n-1} - A) & O \\
O' & (xI_{2n} - B)
\end{pmatrix}
\]
\[= \frac{x(x - 8n)}{(x - 1)} \cdot {|xI_{6n-1} - A|} \cdot {|xI_{2n} - B|}\
.\]

Now, we have
\[
|xI_{6n-1} - A| =
\begin{vmatrix}
x-(6n-1) & 1 & 1 & 1 & \cdots & 1 & 1 & \cdots & 1 \\
1 & & & & xI_{4n-2}-A' & & & \mathcal{O''} & \\
1 & & & & & & & & \\
1 & & & & & & & & \\
\vdots &  \\
1 & &  & &  &  &  & \\
1 &  &  &  & \mathcal{O'''} & \ & & xI_{2n}-B' & \\
\vdots & \\
1 & & & & & & & & \\
\end{vmatrix}
.\]

Apply row operation \( R_1 \to (x - 2)R_1 - R_2 - R_3 - \cdots - R_{6n-1} \) 
and then expand by using the first row, we get
\[
|xI_{6n-1} - A| = \frac{(x - 1)(x - 6n)}{(x - 2)} |xI_{4n-2} - A'| \cdot |xI_{2n} - B'|.
\]
Now
\[
|xI_{4n-2} - A'|= |xI_{4n-2}-((4n)I_{4n-2}-J_{4n-2})|
,\]
\[
|xI_{2n} - B'|= |xI_{2n}-((2n+2)I_{2n}-J_{2n})|
,\]
\[
|xI_{2n} - B| = |xI_{n} - C| \cdot |xI_{n} - C|
,\]
\[
|xI_{n} - C|= |xI_{n}-((n+1)I_{n}-J_{n})|
.\]
From the computation, we obtain
\[
|xI_{4n-2} - A'| = (x - 4n)^{4n-3} (x - 2)
.\]
\[
|xI_{2n} - B'|= (x - 2) (x - (2n+2))^{2n-1}, 
\]
\[
|xI_{2n} - B| = |xI_{n} - C| \cdot |xI_{n} - C|, 
\]
\[
|xI_{n} - C|= (x - 1) (x - (n+1))^{n-1}, 
\]
It implies that
\[
|xI_{2n} - B| = (x - 1)^2 (x - (n+1))^{2n-2}
\]
It follows the result.
\end{proof}
\begin{corollary}
If $n$ is odd, then the Laplacian spectrum of $CSEP(SD_{8n})$ is given by 
\[\displaystyle \begin{pmatrix}
0 & 1& 2 & n+1 & 2n+2 & 4n & 6n & 8n\\
 1 & 2&1 & 2n-2 & 2n-1 & 4n-3 &1 &1\\
\end{pmatrix}.\]
\end{corollary}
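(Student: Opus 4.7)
The plan is to derive this corollary as an immediate consequence of the preceding theorem, which furnishes the characteristic polynomial of $L(CSEP(SD_{8n}))$ in fully factored form when $n$ is odd. Since the Laplacian spectrum is by definition the multiset of roots of the characteristic polynomial (with multiplicities), the corollary reduces to reading off the distinct roots and their exponents from the factorization
\[
\Phi(L(CSEP(SD_{8n})), x) = x(x - 8n)(x - 6n)(x - 2)(x - 4n)^{4n-3}(x - 1)^{2}(x-(n+1))^{2n-2}(x-(2n+2))^{2n-1}.
\]

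First, I would list the distinct linear factors appearing: $x$, $x-1$, $x-2$, $x-(n+1)$, $x-(2n+2)$, $x-4n$, $x-6n$, and $x-8n$. For odd $n \geq 3$ these eight scalars $0,\,1,\,2,\,n+1,\,2n+2,\,4n,\,6n,\,8n$ are pairwise distinct, so the factorization above is already a factorization into pairwise-coprime linear factors, and the multiplicity of each eigenvalue is simply the exponent of the corresponding factor.

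Next, I would record the exponents: $1,\,2,\,1,\,2n-2,\,2n-1,\,4n-3,\,1,\,1$, respectively. As a sanity check I would verify that these multiplicities sum to the order of $SD_{8n}$, namely $8n$:
\[
1 + 2 + 1 + (2n-2) + (2n-1) + (4n-3) + 1 + 1 = 8n,
\]
which confirms that no eigenvalue has been double-counted or omitted. One may also note that the simple eigenvalue $0$ is consistent with connectedness of $CSEP(SD_{8n})$, since the identity element is adjacent to every other vertex.

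There is essentially no obstacle here; the only minor point to be careful about is confirming the distinctness of the eight scalars for odd $n$, so that the factorization really does encode the spectrum with the listed multiplicities. Assembling these data into the standard two-row matrix format yields exactly the Laplacian spectrum
\[
\begin{pmatrix}
0 & 1 & 2 & n+1 & 2n+2 & 4n & 6n & 8n\\
1 & 2 & 1 & 2n-2 & 2n-1 & 4n-3 & 1 & 1\\
\end{pmatrix},
\]
as claimed.
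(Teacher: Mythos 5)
Your proposal is correct and matches the paper's (implicit) argument: the corollary is simply read off from the factored characteristic polynomial in the preceding theorem, with the multiplicities taken as the exponents of the distinct linear factors. Your extra checks — distinctness of the eight values for odd $n\ge 3$ and the multiplicities summing to $8n$ — are sound and only strengthen the statement.
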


By {\cite[Corollary 4.2]{mohar1991spectrum}}, we have the following corollary.

\begin{corollary}
If $n$ is odd, then the number of spanning trees of $CSEP(SD_{8n})$ are $ 3.2^{8n-4} n^{4n-2} (2n+2)^{2n-1} \\(n+1)^{2n-2}.$
\end{corollary}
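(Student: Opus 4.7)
The plan is to invoke Kirchhoff's matrix tree theorem in the form cited from Mohar: for a connected graph $\Gamma$ on $N$ vertices with nonzero Laplacian eigenvalues $\mu_1,\mu_2,\ldots,\mu_{N-1}$, the number of spanning trees equals $\tfrac{1}{N}\prod_{i=1}^{N-1}\mu_i$. Since $CSEP(SD_{8n})$ has $N=8n$ vertices and its full Laplacian spectrum was just listed in the previous corollary, the proof reduces to a single product evaluation.

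First I would write out the product using the multiplicities from the odd-$n$ corollary: the nonzero eigenvalues of $L(CSEP(SD_{8n}))$ are $1$ (mult.\ $2$), $2$ (mult.\ $1$), $n+1$ (mult.\ $2n-2$), $2n+2$ (mult.\ $2n-1$), $4n$ (mult.\ $4n-3$), $6n$ (mult.\ $1$), and $8n$ (mult.\ $1$); one quickly checks these multiplicities sum to $8n-1$ as required. Then Mohar's formula gives
\[
\tau(CSEP(SD_{8n}))=\frac{1}{8n}\cdot 1^{2}\cdot 2\cdot (n+1)^{2n-2}\cdot (2n+2)^{2n-1}\cdot (4n)^{4n-3}\cdot 6n\cdot 8n.
\]

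Next I would simplify. The leading factor $\frac{1}{8n}\cdot 2\cdot 6n\cdot 8n$ collapses to $12n$, and rewriting $(4n)^{4n-3}=2^{8n-6}n^{4n-3}$ absorbs one more factor of $n$ to give $n^{4n-2}$. Combining $12\cdot 2^{8n-6}=3\cdot 2^{8n-4}$ yields
\[
\tau(CSEP(SD_{8n}))=3\cdot 2^{8n-4}\,n^{4n-2}\,(2n+2)^{2n-1}\,(n+1)^{2n-2},
\]
which is the claimed formula.

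There is no real obstacle in the argument: the only thing to be careful about is bookkeeping the exponents correctly when converting $(4n)^{4n-3}$ into powers of $2$ and $n$, and making sure the $\frac{1}{8n}$ cancels cleanly against the $6n\cdot 8n$ contributed by the two ``large'' eigenvalues. Connectedness of $CSEP(SD_{8n})$ (needed for Kirchhoff's theorem to give a nonzero count) is automatic here because the identity is a dominating vertex, and this is already reflected in the Laplacian eigenvalue $0$ having multiplicity exactly one in the preceding corollary.
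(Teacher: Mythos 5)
Your proposal is correct and follows exactly the paper's route: the paper also derives the spanning-tree count by applying Mohar's Corollary 4.2 (the matrix-tree theorem, $\tau=\frac{1}{N}\prod \mu_i$ over nonzero Laplacian eigenvalues) to the spectrum listed in the preceding corollary, and your simplification $(1/8n)\cdot 2\cdot 6n\cdot 8n\cdot(4n)^{4n-3}=3\cdot 2^{8n-4}n^{4n-2}$ checks out. Nothing is missing.
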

 \section{Laplacian Spectrum of Conjugacy Supercommuting Graphs}
Arunkumar $et \ al.$ \cite{arunkumar2022super}  explained the concept of super graphs for a graph type A and equivalence relation B on any given group structure. Now, we shall derive the supercommuting graphs for the semidihedral group $SD_{8n}$.

\begin{theorem}\label{SD8n-CSCommGraphBuilding}
For the semidihedral group $SD_{8n}$, the structure of conjugacy supercommuting graph of $SD_{8n}$ is given by
\[CSCom(SD_{8n})= 
\begin{dcases}
K_4 \lor(K_{2n-2}\cup K_4) [K_1,K_1,K_1,K_1,K_2,K_2,\ldots,K_2,K_{n},K_{n},K_{n},K_{n}], \text{if n is odd}  \\
K_2 \lor(K_{2n-1}\cup K_1 \cup K_1)[K_2,K_2,K_2,K_2,\ldots,K_2,K_{2n},K_{2n}], \text{if n is even}  \\
\end{dcases}
\]
\end{theorem}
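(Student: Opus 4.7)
The plan is to identify the conjugacy-class decomposition of $SD_{8n}$, to decide for each pair of classes whether some representatives commute, and to read off the resulting adjacencies as a generalised composition.

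I begin by recalling the central structure from Section 2 and computing the remaining conjugacy classes using the defining relations: a short calculation shows that conjugation by $a$ sends $a^kb$ to $a^{k+2n+2}b$, while conjugation by $b$ sends it to $a^{-k}b$ if $k$ is even and to $a^{2n-k}b$ if $k$ is odd. Since $2n+2\equiv 0 \pmod 4$ exactly when $n$ is odd, the reflections split into four classes $C_1,C_2,C_3,C_4$ stratified by $k \bmod 4$ (each of size $n$) when $n$ is odd, and into two classes $D_1$ (odd $k$) and $D_2$ (even $k$) of size $2n$ when $n$ is even. The non-central rotations yield size-$2$ classes $\{a^i, a^{2n-i}\}$ or $\{a^i, a^{4n-i}\}$, numbering $2n-2$ for $n$ odd and $2n-1$ for $n$ even.

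With this data I set up the outer graph. Central elements commute with everything, producing the join $K_{|Z|} \lor (\cdots)$; non-central rotations all lie in $\langle a\rangle$ and so pairwise commute, contributing a clique $K_{2n-2}$ (odd $n$) or $K_{2n-1}$ (even $n$) among the rotation blocks. To exclude rotation-to-reflection edges I note that $a^i$ commutes with $a^jb$ iff $ba^i = a^ib$, which via the parity formulas for $ba^i$ reduces to $a^i\in Z(SD_{8n})$; hence no non-central rotation class is joined to any reflection class. The crucial step is the reflection-to-reflection analysis. A direct calculation yields
\[
(a^ib)(a^jb) = \begin{cases} a^{i-j}, & j \text{ even}, \\ a^{i+2n-j}, & j \text{ odd}, \end{cases}
\qquad
(a^jb)(a^ib) = \begin{cases} a^{j-i}, & i \text{ even}, \\ a^{j+2n-i}, & i \text{ odd}, \end{cases}
\]
and equating these produces the commutation criterion: $i\equiv j \pmod{2n}$ for same-parity pairs, and $i + n\equiv j \pmod{2n}$ for opposite-parity pairs.

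For $n$ odd I exhibit explicit commuting representatives for each of the six pairs $(C_p,C_q)$: same-parity pairs $(C_1,C_3)$ and $(C_2,C_4)$ use a shift by $2n$ (which is $\equiv 2 \bmod 4$ for $n$ odd, connecting $C_1 \leftrightarrow C_3$ and $C_2 \leftrightarrow C_4$), while opposite-parity pairs use a shift by $n$ or $3n$, the correct choice depending on $n \bmod 4$. This delivers the $K_4$ among the reflection blocks in the outer graph. For $n$ even, the opposite-parity criterion demands $i+n \equiv j \pmod{2n}$ with $i$ and $n$ even but $j$ odd, which is impossible; hence $D_1$ and $D_2$ are non-adjacent, giving $K_1 \cup K_1$. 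Finally, by the standard super-graph convention each conjugacy class becomes a clique, contributing inner graphs $K_1, K_2, K_n, K_{2n}$ of the appropriate sizes, and assembling the outer graph with these inner cliques yields exactly the generalised composition stated. The main obstacle is the reflection-to-reflection analysis: keeping careful track of how the two parity cases interact with the four (for $n$ odd) or two (for $n$ even) reflection classes, in particular performing the case split on $n \bmod 4$ to produce commuting representatives across all six pairs $(C_p, C_q)$ in the odd-$n$ case, while cleanly establishing the parity obstruction in the even-$n$ case.
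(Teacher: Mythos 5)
Your proposal is correct and follows essentially the same route as the paper: compute the conjugacy classes of $SD_{8n}$, check pairwise which classes contain commuting representatives (center joined to everything, rotation classes forming a clique, no rotation--reflection edges since a reflection's centralizer meets $\langle a\rangle$ only in the center, all four reflection classes mutually adjacent for odd $n$ via elements differing by $n$, $2n$, $3n$, and the parity obstruction separating $D_1$ and $D_2$ for even $n$), then assemble the generalized composition with each class as an inner clique. Your explicit commutation criterion and the case split on $n \bmod 4$ is just a more systematic version of the paper's observation that $b$ commutes with $a^nb, a^{2n}b, a^{3n}b$, which land in the three other reflection classes.
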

\begin{proof}
We have seen the conjugacy classes of semidihedral group $SD_{8n}$ in Theorem $3.11.$
\\ \noindent\textbf{Case 1:} \emph{When $n$ is odd.} Note that $b \in C_{1}$ and $b$ is adjacent to $\{e,a^{n},a^{2n},a^{3n},a^{n}b,a^{2n}b,a^{3n}b\}$ in $Com(SD_{8n})$. Observe that $a^{2n}b \in C_{3}.$ If $a^{n}b \in C_{2},$ then $a^{3n}b \in C_{4}.$ Otherwise, $a^{n}b \in C_{4} $ and $a^{3n}b \in C_{2}.$ 
It implies that each element of $C_{1}$ is adjacent to every element of $C_{2}, C_{3}~  \text{and}~ C_{4}, $ respectively. 
 Similarly, each element of $C_{i}$ is adjacent to every element of $C_{j}$ for $ 1\leq i,j\leq 4.$
Now, $\{a^{i}b\}$ commutes with $\{a^k\}$ if and only if $k \in \{0,n,2n,3n\}.$ Also $\{a^k\}
\notin C_{j}$ for $1 \leq j\leq 4.$ It implies that $\{a^{i}b\}$ is not adjacent to $\{a^k\}$ in $CSCom(SD_{8n}).$ 
 In the conjugacy supercommuting graph of the semidihedral group, the neighbourhood of each element is given as:



\begin{enumerate}
    \item[(i)] $N[e]= N[a^n]= N[a^{2n}]= N[a^{3n}]= SD_{8n}. $
         \item[(ii)] $N[x]= \{a^ib:0 \leq i \leq 4n-1\}\cup \{e, a^n, a^{2n}, a^{3n}\}$ if and only if $x\in \{a^jb:0 \leq j \leq 4n-1\}$.
 \item[(iii)] $N[x]=\langle a \rangle$ if and only if $x \in \langle a \rangle$\textbackslash
 $\{e, a^n,a^{2n},a^{3n}\}.$ 
    \end{enumerate}

\noindent\textbf{Case 2:} \emph{When $n$ is even.} In case of $n$ being even, $2n+i$ remains even or odd, respectively. It implies that no element of $D_1$ is adjacent to any element of $D_2$ in ${CSCom}(SD_{8n}).$ In the conjugacy supercommuting graph of the semidihedral group, the neighbourhood of each element is given as:
\begin{enumerate}
\item[(i)] $N[e]= N[a^{2n}] = SD_{8n}. $
\item[(ii)] $N[x]=\{e,a^{2n}\}\cup\{ a^{2i}b: 0 \leq i \leq 2n-1\}$ if and only if $x \in \{a^{2i}b: 0 \leq i \leq 2n-1\}$.

\item[(iii)] $N[x]=\{e,a^{2n}\}\cup \{a^{2j+1}b:0 \leq j \leq 2n-1\}$ if and only if $x \in
 \{a^{2j+1}b: 0 \leq j \leq 2n-1\}.$

 \item[(iv)] $N[x]=\langle a \rangle$ if and only if $x \in \langle a \rangle$\textbackslash
$\{e, a^{2n}\}.$
\end{enumerate}


    \vspace{0.5mm}

Hence, we get the aforementioned graphs.
\end{proof}

 Another representation of Theorem \ref{SD8n-CSCommGraphBuilding} is given in {\cite [Theorem 2.5]{das2024super}}.

\begin{theorem} \label{SD8n-CSCom-n odd}
If $n$ is odd, then the characteristic polynomial of the Laplacian matrix of ${CSCom}(SD_{8n})$ is given by 
\[\Phi(L({CSCom}(SD_{8n})), x) = x(x-4)(x-4n)^{4n-5}(x- (4n+4))^{4n-1}(x-8n)^{4}.\]
\end{theorem}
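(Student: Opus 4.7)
My plan follows the determinant-reduction template established in Theorems 3.2, 3.7, and 3.14. First I will arrange the $8n$ vertices in the order: the four central elements $e, a^n, a^{2n}, a^{3n}$; then the $4n-4$ non-central elements of $\langle a\rangle$; and finally the $4n$ elements $a^i b$ comprising $C_1 \cup C_2 \cup C_3 \cup C_4$. From the adjacency information in the proof of Theorem 4.1 (Case 1), the four central elements are universal (each of degree $8n-1$), the non-central cyclic elements induce a clique $K_{4n-4}$ with no edges to any $a^i b$, and the $a^i b$'s induce a clique $K_{4n}$. Consequently the Laplacian has the block form
\[
L(CSCom(SD_{8n})) = \begin{pmatrix} 8n\, I_4 - J_4 & -J_{4, 4n-4} & -J_{4, 4n} \\ -J_{4n-4, 4} & 4n\, I_{4n-4} - J_{4n-4} & \mathcal O \\ -J_{4n, 4} & \mathcal O' & (4n+4)\, I_{4n} - J_{4n} \end{pmatrix}.
\]

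The key reduction applies the familiar hub-row trick four times, once for each central vertex. Since every column of $xI - L$ has sum $x$, applying $R_1 \to (x-1) R_1 - R_2 - \cdots - R_{8n}$ collapses the first row to $(x(x-8n), 0, 0, \ldots, 0)$; expanding along that row extracts the factor $\frac{x(x-8n)}{x-1}$ and leaves an $(8n-1) \times (8n-1)$ matrix whose columns each now sum to $x-1$. Iterating the analogous operation $R_1 \to (x-k) R_1 - \sum_{i\ge 2} R_i$ for $k = 2, 3, 4$ on the successive reduced matrices extracts further factors $\frac{(x-1)(x-8n)}{x-2}$, $\frac{(x-2)(x-8n)}{x-3}$, and $\frac{(x-3)(x-8n)}{x-4}$. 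Their product telescopes to $\frac{x(x-8n)^4}{x-4}$, leaving the $(8n-4) \times (8n-4)$ block-diagonal determinant
\[
\det \begin{pmatrix} (x - 4n)\, I_{4n-4} + J_{4n-4} & \mathcal O \\ \mathcal O' & (x - 4n - 4)\, I_{4n} + J_{4n} \end{pmatrix}.
\]

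Each diagonal block is of the form $\alpha I_m + J_m$, whose determinant equals $\alpha^{m-1}(\alpha + m)$, so the first block contributes $(x-4n)^{4n-5}(x-4)$ and the second contributes $(x-4n-4)^{4n-1}(x-4)$. Multiplying and cancelling one $(x-4)$ against the denominator yields exactly $x(x-4)(x-4n)^{4n-5}(x-(4n+4))^{4n-1}(x-8n)^4$, as claimed.

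The main technical obstacle is the careful bookkeeping across the four successive hub eliminations: one must track that the common column sum decreases from $x$ to $x-1$ to $x-2$ to $x-3$ after each reduction, and verify at every stage that the current hub row really collapses to a single nonzero entry in its diagonal position. This is a direct extension of the two-hub argument in Theorem 3.14, and once the first iteration is verified the remaining three follow the identical pattern. As a sanity check, the multiplicities $1 + 1 + 4 + (4n-5) + (4n-1) = 8n$ sum to the order of the group, and the trace $4(8n-1) + (4n-4)(4n-1) + 4n(4n+3) = 32n^2 + 24n$ agrees with the sum of eigenvalues $4 + 4 \cdot 8n + (4n-5) \cdot 4n + (4n-1)(4n+4)$.
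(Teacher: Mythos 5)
Your proposal is correct and follows essentially the same route as the paper: the same vertex ordering and block structure $A=4nI_{4n-4}-J_{4n-4}$, $B=(4n+4)I_{4n}-J_{4n}$, the same four successive hub-row operations extracting $\frac{x(x-8n)^4}{x-4}$, and the same evaluation of the two blocks via the eigenvalues of $\alpha I_m+J_m$. The only difference is presentational (you expand after each hub elimination and track the decreasing column sums, while the paper performs the four row operations consecutively before expanding), so no further comparison is needed.
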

\begin{proof}
    The Laplacian matrix $L({CSCom}(SD_{8n}))$ is an $8n \times 8n$ matrix given below, in which the rows and columns are indexed in order by the vertices $e = a^{4n}=b^{2},a^{n},a^{2n},a^{3n}, a,a^{2},
    \ldots, a^{n-1},a^{n+1},\ldots ,a^{2n-1}, a^{2n+1},\ldots, a^{3n-1}, a^{3n+1}, \ldots, a^{4n-1}$ and then $b, a^{2}b,a^{4}b,  \ldots,a^{4n-2}b$ and finally we have,
    $ab, a^{3}b,a^{5}b,  \ldots,a^{4n-1}b$.

 \[
L(CSCom(SD_{8n})) = \begin{pmatrix}
8n-1 & -1 & -1 & -1 & \cdots & -1 & -1 & \cdots & -1 \\
-1 & 8n-1 & -1 & -1 & \cdots & -1 & -1 & \cdots & -1 \\
-1 & -1 & 8n-1 & -1 & \cdots & -1 & -1 & \cdots & -1 \\
-1 & -1 & -1 & 8n-1 & \cdots & -1 & -1 & \cdots & -1 \\
-1 & -1 & -1 & -1 & A & & & \mathcal{O} & \\
\vdots & \vdots & \vdots & \vdots &  \\
-1 & -1 & -1 & -1 & \mathcal{O}' & & \ & B & \\
\end{pmatrix},
\] 
 where
	\[A =\displaystyle \begin{pmatrix}
	4n-1 & -1 & \cdots & -1\\
        -1 & 4n-1 & \cdots & -1\\
        \vdots & \vdots & \ddots & \vdots \\
        -1 & -1 & \cdots & 4n-1\\
	\end{pmatrix}\] 
	and 
        \[ B =\displaystyle \begin{pmatrix}
		4n+3 & -1 & \cdots & -1\\
        -1 & 4n+3 & \cdots & -1\\
        \vdots & \vdots & \ddots & \vdots\\
        -1 & -1 & \cdots & 4n+3\\
	\end{pmatrix},\]\\ where orders of $A$ and $B$ are $4n-4\times 4n-4$ and $4n\times 4n$, respectively. $\mathcal{O}$ is zero matrix of order $4n-4 \times \ 4n$ and $\mathcal{O'}$ is the transpose of the $\mathcal{O}$ matrix. It follows that, $A= 4nI_{4n-4}-J_{4n-4}$ and $B=(4n+4)I_{4n} - J_{4n}.$        
\\
Then the characteristic polynomial of $L({CSCom}(SD_{8n}))$ is
	\[\Phi(L(CSCom(SD_{8n})), x)
 =\displaystyle
 \begin{vmatrix}
	x-(8n-1) & 1 & 1 & 1 & \cdots & \cdots & 1   \\
	1 &x-(8n-1) & 1 & 1  & \cdots & \cdots & 1 \\
        1 & 1 & x-(8n-1) & 1 & \cdots & \cdots & 1   \\
	1 & 1 & 1 & x-(8n-1)  & \cdots & \cdots & 1  \\
	1  & 1&1 & 1 & xI_{4n-4}-A &  &  \mathcal{O}  \\
 
         \vdots & \vdots & \vdots & \vdots &  \\
		1  & 1 &1&1&\mathcal{O'}& & xI_{4n}-B  \\
\end{vmatrix}.
\]
After applying the following row operations consecutively
	\begin{itemize}
		\item $R_1 \rightarrow (x -1)R_1 - R_2  - \cdots - R_{8n}.$
		\item $R_2 \rightarrow (x -2)R_2 - R_3 - \cdots - R_{8n}.$
        \item $R_3 \rightarrow (x -3)R_3 - R_4 - \cdots - 
        R_{8n}.$
        \item $R_4 \rightarrow (x -4)R_4 - R_5 - \cdots - 
        R_{8n}.$
		
	\end{itemize}	
	
 we obtain, 
	
	\[\Phi(L(CSCom(SD_{8n})), x) = \frac{x(x -8n)^4}{(x-4)}  \displaystyle \begin{vmatrix}
	xI_{4n-4} -A & \mathcal O\\
	\mathcal O' & xI_{4n} - B
	\end{vmatrix} = \frac{x(x -8n)^4}{(x-4)} |xI_{4n-4}- A|\cdot|xI_{4n} - B|.\]
	
\[\Phi(L({CSCom}(SD_{8n})), x), x) = \frac{x(x -8n)^4}{(x-1)} |xI_{4n-4} - A|\cdot |xI_{4n}-B|.\tag{3}\] Clearly, 
	$|xI_{4n-4}-A| =(x -4)(x-4n)^{4n-5}$ 
 and	$|xI_{4n}-B| =(x -4)(x-(4n+4))^{4n-1}$. Thus, we obtain the proposed characteristic polynomial.
\end{proof}
\begin{corollary}
If $n$ is odd, then the Laplacian spectrum of $CSCom(SD_{8n})$ is given by 
\[\displaystyle \begin{pmatrix}
0 & 4 & 4n &  4n+4 & 8n\\
 1 & 1 & 4n-5 & 4n-1 & 4\\
\end{pmatrix}.\]
\end{corollary}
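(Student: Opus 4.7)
The plan is to read off the Laplacian eigenvalues of $CSCom(SD_{8n})$ directly from the factorization of the characteristic polynomial established in Theorem~\ref{SD8n-CSCom-n odd}. Since the Laplacian matrix is real symmetric, its eigenvalues are precisely the roots of $\Phi(L(CSCom(SD_{8n})),x)$ counted with algebraic multiplicity; thus from
\[
\Phi(L(CSCom(SD_{8n})),x) \;=\; x(x-4)(x-4n)^{4n-5}(x-(4n+4))^{4n-1}(x-8n)^{4},
\]
one immediately identifies the distinct eigenvalues $0,\ 4,\ 4n,\ 4n+4,\ 8n$ with algebraic multiplicities $1,\ 1,\ 4n-5,\ 4n-1,\ 4$, respectively, which is exactly the spectrum claimed.

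As a consistency check, I would verify that the sum of the multiplicities equals the order of the Laplacian matrix, namely $|SD_{8n}|=8n$: indeed $1+1+(4n-5)+(4n-1)+4 = 8n$. I would also note that the eigenvalue $0$ appears with multiplicity exactly one, which is consistent with $CSCom(SD_{8n})$ being connected; this connectedness is evident from Theorem~\ref{SD8n-CSCommGraphBuilding}, where the graph is built as a join with $K_{4}$, so every vertex lies in the neighbourhood of the four central vertices $e, a^{n}, a^{2n}, a^{3n}$.

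Optionally, to make the origin of each eigenvalue transparent, I would trace the contributions through the proof of Theorem~\ref{SD8n-CSCom-n odd}. The four eigenvalues equal to $8n$ arise from the four dominant vertices $e,a^{n},a^{2n},a^{3n}$ via the factor $(x-8n)^{4}$; the eigenvalues $4n$ with multiplicity $4n-5$ come from the block $A = 4nI_{4n-4}-J_{4n-4}$, whose spectrum is $4n$ (multiplicity $4n-5$) together with $4$ (multiplicity $1$); and the eigenvalues $4n+4$ with multiplicity $4n-1$ come from $B = (4n+4)I_{4n}-J_{4n}$, whose spectrum is $4n+4$ (multiplicity $4n-1$) together with $4$ (multiplicity $1$). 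The two copies of $4$ produced by these blocks partially cancel against the denominator factor $(x-4)$ introduced by the row reduction, leaving a single eigenvalue $4$ in the final spectrum. There is no substantive obstacle; the corollary is a direct transcription of the preceding theorem.
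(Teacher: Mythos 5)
Your proposal is correct and matches the paper's (implicit) argument exactly: the corollary is read off directly from the factorization of $\Phi(L(CSCom(SD_{8n})),x)$ in Theorem~\ref{SD8n-CSCom-n odd}, and your multiplicity count $1+1+(4n-5)+(4n-1)+4=8n$ confirms consistency. The optional tracing of eigenvalue origins through the blocks $A$ and $B$ is a correct and harmless elaboration of the same computation already done in the theorem's proof.
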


By {\cite[Corollary 4.2]{mohar1991spectrum}}, we have the following corollary. 

\begin{corollary}
If $n$ is odd, then the number of spanning trees of $CSCom(SD_{8n})$ are $ 2^{8n+1} n^{4n-2} (4n+4)^{4n-1}.$
\end{corollary}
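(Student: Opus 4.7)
The plan is to invoke Corollary 4.2 of \cite{mohar1991spectrum}, which asserts that for a connected graph $\Gamma$ on $N$ vertices with nonzero Laplacian eigenvalues $\mu_1, \mu_2, \ldots, \mu_{N-1}$, the number of spanning trees is $\frac{1}{N}\prod_{i=1}^{N-1}\mu_i$. Since the preceding corollary already records the full Laplacian spectrum of $CSCom(SD_{8n})$ for odd $n$, the argument reduces to a direct arithmetic computation; there is no new structural insight required here beyond careful exponent bookkeeping.

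First I would verify that the sum of the multiplicities equals $8n$, the order of $SD_{8n}$: namely $1 + 1 + (4n-5) + (4n-1) + 4 = 8n$. This confirms that $0$ is a simple eigenvalue, so the graph is connected and the matrix-tree formula applies. Next I would substitute the eigenvalues and their multiplicities into Mohar's formula, obtaining
\[
\tau\bigl(CSCom(SD_{8n})\bigr) \;=\; \frac{1}{8n}\cdot 4 \cdot (4n)^{4n-5} \cdot (4n+4)^{4n-1} \cdot (8n)^{4}.
\]

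The main (and essentially only) obstacle is keeping track of the exponents of $2$ and $n$. Using $(4n)^{4n-5} = 2^{8n-10}\,n^{4n-5}$, $(8n)^4 = 2^{12} n^4$, and $8n = 2^3 n$, one collects the power of $2$ as $2^{2 + (8n-10) + 12 - 3} = 2^{8n+1}$ and the power of $n$ as $n^{(4n-5) + 4 - 1} = n^{4n-2}$, while the factor $(4n+4)^{4n-1}$ passes through unchanged. This yields the claimed count $2^{8n+1}\,n^{4n-2}\,(4n+4)^{4n-1}$, completing the proof.
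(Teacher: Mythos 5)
Your proposal is correct and coincides with the paper's approach: the paper obtains this count precisely by applying Mohar's Corollary 4.2 (the matrix-tree formula $\tau = \frac{1}{N}\prod_{i=1}^{N-1}\mu_i$) to the Laplacian spectrum recorded in the preceding corollary. Your exponent bookkeeping ($2^{2+(8n-10)+12-3}=2^{8n+1}$, $n^{(4n-5)+4-1}=n^{4n-2}$) and the multiplicity check $1+1+(4n-5)+(4n-1)+4=8n$ are accurate, so the argument is complete.
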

\begin{theorem}\label{CSCom-even-SD_8n}  
If $n$ is even, then the characteristic polynomial of the Laplacian matrix of ${CSCom}(SD_{8n})$ is given by
\[\Phi(L({CSCom}(SD_{8n})), x) = x(x-2)^2(x-(2n+2))^{4n-2}(x -4n)^{4n-3}(x-8n)^{2}.\]
\end{theorem}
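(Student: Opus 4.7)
The plan is to mirror the row-reduction argument used in Theorem~\ref{SD8n-CSCom-n odd} (the odd case) and in the earlier $CSEP$ proofs, adapted to the block structure of $CSCom(SD_{8n})$ for even $n$. From Theorem~\ref{SD8n-CSCommGraphBuilding} and the neighbourhood description, when $n$ is even the vertex set partitions into four classes of mutually disjoint behaviour: the two central elements $\{e,a^{2n}\}$ (each adjacent to every other vertex, so with degree $8n-1$), the $4n-2$ non-central rotations (forming a clique among themselves and adjacent to the two central vertices, degree $4n-1$), the $2n$ elements $a^{2k+1}b$ of $D_1$ (clique among themselves plus the two central vertices, degree $2n+1$), and the $2n$ elements $a^{2k}b$ of $D_2$ (similarly, degree $2n+1$). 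Crucially, no edge joins a vertex of $D_1$ to a vertex of $D_2$, and no rotation is joined to any $a^jb$.

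First I would order the vertices as $e,\,a^{2n}$, then the $4n-2$ non-central rotations, then $D_1$, then $D_2$, so that $L(CSCom(SD_{8n}))$ acquires a top-left $2\times 2$ fully-connected block with diagonal $8n-1$ and off-diagonal $-1$, full $-1$ columns/rows linking these two dominant vertices to every other vertex, and a block-diagonal tail consisting of one $(4n-2)\times(4n-2)$ block $A=4nI_{4n-2}-J_{4n-2}$ together with two $2n\times 2n$ blocks equal to $B=(2n+2)I_{2n}-J_{2n}$ (with zero blocks in between). The forms of $A$ and $B$ come directly from the fact that a vertex's within-block row of $L$ has the total degree on the diagonal and $-1$ in every within-block column.

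Next I would follow the row manipulation used in Theorem~\ref{CSEP-odd-Q_4n}: apply $R_1\to(x-1)R_1-\sum_{i\geq 2}R_i$ and then $R_2\to(x-2)R_2-\sum_{i\geq 3}R_i$ to the determinant $|xI_{8n}-L|$. Because the sum of entries in every column of $L$ equals zero, these operations clear all off-diagonal entries in the first two rows and columns while producing the standard factor $x(x-8n)^2/(x-2)$ from the two dominant vertices. What remains is a block-diagonal determinant, giving
\[
\Phi(L(CSCom(SD_{8n})),x)=\frac{x(x-8n)^2}{x-2}\,\bigl|xI_{4n-2}-A\bigr|\cdot\bigl|xI_{2n}-B\bigr|^{2}.
\]

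Finally, the two remaining determinants are evaluated from the spectra of $\alpha I-J$: since $J_m$ has eigenvalues $m$ (once) and $0$ ($m-1$ times), one gets $|xI_{4n-2}-A|=(x-2)(x-4n)^{4n-3}$ and $|xI_{2n}-B|=(x-2)(x-(2n+2))^{2n-1}$. Substituting and simplifying cancels one factor of $(x-2)$ and yields $x(x-2)^2(x-(2n+2))^{4n-2}(x-4n)^{4n-3}(x-8n)^2$, exactly as stated. The only genuinely delicate step is writing the ordering and verifying the block structure, since the $D_1$--$D_2$ disconnection (a consequence of $2n$ being even) is what splits the reflections into two separate copies of $B$ rather than a single larger block; once the matrix is in place, the row reduction and eigenvalue computation are routine.
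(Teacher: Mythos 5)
Your proposal is correct and follows essentially the same route as the paper: the same vertex ordering (two central vertices, then the $4n-2$ non-central rotations, then $D_1$, then $D_2$), the same blocks $A=4nI_{4n-2}-J_{4n-2}$ and two copies of $B=(2n+2)I_{2n}-J_{2n}$, the same two row operations yielding the factor $x(x-8n)^2/(x-2)$, and the same evaluation of $|xI-A|$ and $|xI-B|$ via the spectrum of $\alpha I-J$, giving the stated polynomial.
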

\begin{proof}
The Laplacian matrix $L({CSCom}(SD_{8n}))$ is an $8n \times 8n$ matrix given below, in which the rows and columns are arranged in order by the vertices $e = a^{4n}=b^{2},a^{2n}, a,a^2, \ldots, a^{2n-1},a^{2n+1},\ldots ,a^{4n-1}$ followed by $ab, a^3b,a^5b, \ldots,a^{4n-1}b$ and finally $b,a^2b,a^4b,\ldots, a^{4n-2}b$.

\[L(CSCom(SD_{8n}))  = \displaystyle \begin{pmatrix}
 8n-1 & -1 & -1 & \cdots & -1 & -1 & \cdots & -1 & -1 & \cdots~ -1 \\
-1 & 8n-1 & -1 &  \cdots & -1 & -1 & \cdots & -1 & -1 & \cdots ~ -1 \\
-1  & -1 & -1 & ~ A & & & \mathcal{O} & & & \mathcal{O }\\
 \vdots & \vdots & & & & & & \\ 
	-1  & -1 & -1  & ~\mathcal{O}' & & & B & &  &\mathcal{O}"      \\ 
  \vdots & \vdots& & &  &  &  & \\ 
	-1  & -1 & -1 & ~ \mathcal{O}' & & &\mathcal{O}" & & & B  \\
	\end{pmatrix},\]
  where
	\[A =\displaystyle \begin{pmatrix}
	4n-1 & -1 & \cdots & -1\\
        -1 & 4n-1 & \cdots & -1\\
        \vdots & \vdots & \ddots & \vdots\\
        -1 & -1 & \cdots & 4n-1\\
	\end{pmatrix}\] 
and	 
        \[ B =\displaystyle \begin{pmatrix}
		2n+1 & -1 & \cdots & -1\\
        -1 & 2n+1 & \cdots & -1\\
        \vdots & \vdots & \ddots & \vdots\\
        -1 & -1 & \cdots & 2n+1\\
	\end{pmatrix},
 \]  \\ such that the orders of $A$ and $B$ are $4n-2\times 4n-2$ and $2n\times 2n$, respectively. It follows that $A= 4nI_{4n-2}-J_{4n-2}$ and $B= (2n+2)I_{2n}-J_{2n}$.
 
Then the characteristic polynomial of $L({CSCom}(SD_{8n}))$ is\\
 \[\Phi(L(CSCom(SD_{8n})), x)  = \displaystyle \begin{vmatrix}
	x-(8n-1) & 1 &  \cdots & \cdots & \cdots & \cdots & 1  \\
	1 &x-(8n-1) & \cdots &  \cdots & \cdots & \cdots & 1 \\	
    1 & 1 & xI_{4n-2}-A &  &\mathcal{O}  &  & \mathcal{O} & \\
    1 & 1 & & & & & & \\
    \; \; \vdots & \; \;\vdots&    &  &  &  &  & \\
    1 & 1 & \mathcal{O'} &  & xI_{2n}-B  &   &  \mathcal{O"} \\
    \; \; \vdots & \; \;\vdots&    &  &  &  &  & \\
  1 & 1 & & & & & & \\ 
      1 & 1 & \mathcal{O'} &  & \mathcal{O"}  &   &  xI_{2n}-B \\ 
	\end{vmatrix}.
 \]
\\
 After applying the following row operations consecutively,
	\begin{itemize}
		\item $R_1 \rightarrow (x -1)R_1 - R_2  - \cdots - R_{8n}.$
		\item $R_2 \rightarrow (x -2)R_2 - R_3 - \cdots - R_{8n}.$
	\end{itemize}	
 We obtain, 
	
	\[\Phi(L(CSCom(SD_{8n})), x) = \frac{x(x -8n)^2}{(x-2)}  \displaystyle \begin{vmatrix}
	xI_{4n-2} -A & \mathcal O & \mathcal O\\
	\mathcal O' & xI_{2n} - B & \mathcal{O"}\\
 \mathcal{O'} & \mathcal{O"}  & xI_{2n} - B
	\end{vmatrix} \]
 \vspace{3mm}
 \[
 = \frac{x(x -8n)^2}{(x-2)} |xI_{4n-2}- A|\cdot|xI_{2n} - B|^2.\]
It follows that the characteristic polynomial of ${CSCom}(SD_{8n})$, when $n$ is even, is
$$x(x-2)^2(x-(2n+2))^{4n-2}(x -4n)^{4n-3}(x-8n)^{2}.$$

\end{proof}
\begin{corollary}
If $n$ is even, then the Laplacian spectrum of $CSCom(SD_{8n})$ is given by 
\[\displaystyle \begin{pmatrix}
0 & 2 & 2n+2 &  4n & 8n\\
 1 & 2 & 4n-2 & 4n-3 & 2\\
\end{pmatrix}.\]
\end{corollary}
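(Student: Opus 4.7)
The plan is to obtain this spectrum as an immediate reading of the characteristic polynomial computed in Theorem \ref{CSCom-even-SD_8n}. Once we have
\[
\Phi(L(CSCom(SD_{8n})), x) = x(x-2)^2(x-(2n+2))^{4n-2}(x-4n)^{4n-3}(x-8n)^2,
\]
and observe that this polynomial already factors completely into linear factors over $\mathbb{R}$, the eigenvalues of $L(CSCom(SD_{8n}))$ are precisely the distinct roots $0,\, 2,\, 2n+2,\, 4n,\, 8n$, each occurring with multiplicity equal to the exponent of the corresponding factor, namely $1,\, 2,\, 4n-2,\, 4n-3,\, 2$. Writing these in the standard spectral notation yields exactly the claimed matrix.

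As a consistency check I would verify that the sum of the multiplicities equals the order of the Laplacian matrix: indeed,
\[
1 + 2 + (4n-2) + (4n-3) + 2 = 8n = |SD_{8n}|,
\]
so no eigenvalue is missed and none is double-counted. I would also point out that the multiplicity $1$ of the eigenvalue $0$ is consistent with the connectedness of $CSCom(SD_{8n})$: from Theorem \ref{SD8n-CSCommGraphBuilding}, this graph has the form $K_2 \vee \bigl((K_{2n-1}\cup K_1 \cup K_1)[\ldots]\bigr)$, so the two vertices in the $K_2$ factor are dominating vertices and the graph is connected, forcing $0$ to be a simple Laplacian eigenvalue.

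Since all the heavy lifting is done by Theorem \ref{CSCom-even-SD_8n}, there is essentially no obstacle to overcome here; the only care required is correctly transcribing the exponents from the characteristic polynomial as the multiplicities in the spectrum.
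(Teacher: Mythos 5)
Your proposal is correct and matches the paper's approach: the corollary is an immediate consequence of Theorem \ref{CSCom-even-SD_8n}, obtained by reading off the roots and their multiplicities from the factored characteristic polynomial. The multiplicity check summing to $8n$ and the connectedness remark are sensible sanity checks, though the paper states the corollary without further argument.
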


By {\cite[Corollary 4.2]{mohar1991spectrum}}, we have the following corollary.

\begin{corollary}
If $n$ is even, then the number of spanning trees of $CSCom(SD_{8n})$ are $ 2^{8n-1} n^{4n-2}(2n+2)^{4n-2}.$
\end{corollary}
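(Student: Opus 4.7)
The plan is to apply Kirchhoff's Matrix Tree Theorem in its spectral form (exactly the content of \cite[Corollary 4.2]{mohar1991spectrum}), which states that for a connected graph $\Gamma$ on $N$ vertices with Laplacian eigenvalues $0 = \lambda_N < \lambda_{N-1} \le \cdots \le \lambda_1$, the number of spanning trees equals
\[
\tau(\Gamma) \;=\; \frac{1}{N}\prod_{i=1}^{N-1}\lambda_i .
\]
Since the preceding corollary already hands us the full Laplacian spectrum of $CSCom(SD_{8n})$ for even $n$, the entire argument reduces to one plug-in-and-simplify computation. No structural graph theory is needed beyond invoking Mohar's corollary.

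First I would record $N = 8n$ (which is consistent with the multiplicities $1+2+(4n-2)+(4n-3)+2 = 8n$ in the Laplacian spectrum) and write down the product of nonzero eigenvalues with the multiplicities from the previous corollary:
\[
\prod_{i=1}^{8n-1}\lambda_i \;=\; 2^{2}\cdot (2n+2)^{4n-2}\cdot (4n)^{4n-3}\cdot (8n)^{2}.
\]
Then I would divide by $N = 8n$ and simplify, keeping the factors of $n$ and the factors of $2$ separate. The key simplification is $(4n)^{4n-3} = 2^{8n-6}\, n^{4n-3}$ and $(8n)^2 = 2^6 n^2$, so the powers of $2$ assemble as $2^{2} \cdot 2^{8n-6} \cdot 2^{6} / 2^{3} = 2^{8n-1}$ (the $2^{-3}$ coming from $8n = 2^3 n$ in the denominator), and the powers of $n$ give $n^{4n-3}\cdot n^{2}/n = n^{4n-2}$. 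The factor $(2n+2)^{4n-2}$ passes through untouched.

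Putting these pieces together yields
\[
\tau(CSCom(SD_{8n})) \;=\; 2^{8n-1}\, n^{4n-2}\,(2n+2)^{4n-2},
\]
which is precisely the claimed count. I do not anticipate any genuine obstacle here; the only place one could slip is in the bookkeeping of the powers of $2$, which is why I would separate the $2$-part and $n$-part of each factor before collecting. This exactly mirrors the strategy used for the earlier spanning-tree corollaries (for $CSEP(D_{2n})$, $CSEP(Q_{4n})$, $CSEP(SD_{8n})$, and $CSCom(SD_{8n})$ in the odd-$n$ case), so the same template applies verbatim.
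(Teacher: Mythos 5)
Your proposal is correct and is exactly the paper's (implicit) argument: the paper also obtains this count by applying the spectral form of the matrix tree theorem from \cite[Corollary 4.2]{mohar1991spectrum} to the Laplacian spectrum of $CSCom(SD_{8n})$ computed just before. Your bookkeeping of the powers of $2$ and $n$ checks out, so nothing is missing.
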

\vspace{5pt}

\vspace{1cm}
\noindent
{\bf Varun J Kaushik\textsuperscript{\normalfont 1}, \bf Ekta\textsuperscript{\normalfont 1}, \bf Parveen\textsuperscript{\normalfont 2}, Jitender Kumar\textsuperscript{\normalfont 1}}
\bigskip

\noindent{\bf Addresses}:

\end{document}